\documentclass[12pt]{amsart}
\usepackage{mathdots}
\usepackage[margin=1.25in]{geometry}

\usepackage{amsmath}
\usepackage{amsfonts}
\usepackage{amssymb}
\usepackage{amscd}
\usepackage{graphicx}
\usepackage[abbrev,alphabetic]{amsrefs}
\usepackage[dvipsnames]{xcolor}
\usepackage{soul}
\setstcolor{red}
\usepackage{stmaryrd}
\usepackage{multicol}

\newcommand\hidden[1]{}
\usepackage{mathtools}
\newtagform{hidden}[\hidden]{}{}

\usepackage{hyperref}

\usepackage{amsthm}
\usepackage{comment}
\usepackage[all,cmtip]{xy}
\usepackage{tikz-cd}
\usetikzlibrary{cd}
\usetikzlibrary{arrows.meta}

\definecolor{ink}{HTML}{263B48}
\definecolor{accent}{HTML}{365E71}
\definecolor{singularpoint}{HTML}{A65F50}
\definecolor{paperwash}{HTML}{F5F7F8}

\usepackage{multicol}

\makeatletter
\def\@tocline#1#2#3#4#5#6#7{\relax
  \ifnum #1>\c@tocdepth 
  \else
    \par \addpenalty\@secpenalty\addvspace{#2}
    \begingroup \hyphenpenalty\@M
    \@ifempty{#4}{
      \@tempdima\csname r@tocindent\number#1\endcsname\relax
    }{
      \@tempdima#4\relax
    }
    \parindent\z@ \leftskip#3\relax \advance\leftskip\@tempdima\relax
    \rightskip\@pnumwidth plus4em \parfillskip-\@pnumwidth
    #5\leavevmode\hskip-\@tempdima
      \ifcase #1
       \or\or \hskip 1em \or \hskip 2em \else \hskip 3em \fi
      #6\nobreak\relax
    \hfill\hbox to\@pnumwidth{\@tocpagenum{#7}}\par
    \nobreak
    \endgroup
  \fi}
\makeatother

\hypersetup{
bookmarksdepth=3,
bookmarksopen,
bookmarksnumbered,
pdfstartview=FitH,
colorlinks,
linkcolor=Sepia,
anchorcolor=BurntOrange,
citecolor=MidnightBlue,
citecolor=OliveGreen,
filecolor=BlueViolet,
menucolor=Yellow,
urlcolor=OliveGreen
}

\newsavebox{\pullback}
\sbox\pullback{
\begin{tikzpicture}
\draw (0,0) -- (1ex,0ex);
\draw (1ex,0ex) -- (1ex,1ex);
\end{tikzpicture}}

\newsavebox{\pullbackdl}
\sbox\pullbackdl{
\begin{tikzpicture}
\draw (-1ex,0ex) -- (0ex,0ex);
\draw (0ex,-1ex) -- (0ex,0ex);
\end{tikzpicture}}

\newsavebox{\pushoutdr}
\sbox\pushoutdr{
\begin{tikzpicture}
\draw (-1ex,-1ex) -- (-1ex,0ex);
\draw (-1ex,0ex) -- (0ex,0ex);
\end{tikzpicture}}

\newcommand{\stacksproj}[1]{{\cite{stacks-project}*{Tag~{#1}}}}

\newcommand{\bF}{\mathbb{F}}

\newcommand{\bP}{\mathbb{P}}

\newcommand{\cF}{\mathcal{F}}

\newcommand{\cJ}{\mathcal{J}}
\newcommand{\cI}{\mathcal{I}}

\newcommand{\cO}{\mathcal{O}}
\newcommand{\sO}{\mathcal{O}}

\DeclareMathOperator{\Spec}{Spec}

\theoremstyle{plain}
\newtheorem{theorem}{Theorem}[section]

\newtheorem{proposition}[theorem]{Proposition}
\newtheorem{proposition-definition}[theorem]{Proposition-Definition}
\newtheorem{lemma}[theorem]{Lemma}

\newtheorem*{claim*}{Claim}

\newtheorem{theoremIntro}{Theorem}

 \newtheorem*{theoremB}{Theorem B}

\theoremstyle{definition}
\newtheorem{definition}[theorem]{Definition}

\newtheorem*{setup*}{Setup}

\theoremstyle{remark}
\newtheorem{remark}[theorem]{Remark}

\numberwithin{equation}{theorem}



\newif\ifshowColoursAndTodoes

\showColoursAndTodoestrue

\ifshowColoursAndTodoes
\def\todo#1{\textcolor{Mahogany}%
{\footnotesize\newline{\color{Mahogany}\fbox{\parbox{\textwidth-15pt}{\textbf{todo: } #1}}}\newline}}
\def\commentbox#1{\textcolor{Mahogany}%
{\footnotesize\newline{\color{Mahogany}\fbox{\parbox{\textwidth-15pt}{\textbf{comment: } #1}}}\newline}}

\else
\def\todo#1{}
\def\commentbox#1{}
\renewcommand{\st}[1]{}
\colorlet{red}{black!100}
\colorlet{teal}{black!100}
\colorlet{brown}{black!100}
\colorlet{blue}{black!100}
\colorlet{magenta}{black!100}
\colorlet{purple}{black!100}
\colorlet{cyan}{black!100}
\fi

\RequirePackage[normalem]{ulem}
\DeclareRobustCommand{\DIFdel}[1]{{\color{red}\ifmmode
  \text{\sout{\ensuremath{\displaystyle #1}}}\else\sout{#1}\fi}}
\newcommand{\DIFreplacementsep}{\ifmmode\;\else\ \fi}
\providecommand{\DIFaddbegin}{}
\providecommand{\DIFaddend}{}

\title[An irregular smooth Fano fourfold]{An irregular smooth Fano fourfold in positive characteristic}
\author{Joe Waldron}
\address{Michigan State University, Department of Mathematics, Wells Hall, 619 Red Cedar Road, East Lansing, MI 48824, USA}
\email{waldro51@msu.edu}
\author{Jakub Witaszek}
\address{Northwestern University, Department of Mathematics, Lunt Hall, 2033 Sheridan Road, Evanston, IL 60208, USA}
\email{jakub.witaszek@northwestern.edu}
\date{}
\hypersetup{pdftitle={An irregular smooth Fano fourfold},
            pdfauthor={Joe Waldron and Jakub Witaszek}}


\newtheorem{construction}[theorem]{Construction}

\begin{document}

\begin{abstract}
We construct an explicit smooth projective Fano fourfold $X$ over an algebraically closed field $k$ of
characteristic two with $H^1(X,\cO_X)\simeq k$. Moreover, $X$ does not admit any free rational curve, and so it is not separably uniruled. The example has been constructed using ChatGPT.
\end{abstract}

\maketitle

\section{Introduction}\label{sec:introduction}

Over a field of characteristic zero, Kodaira vanishing implies that $H^i(X,\cO_X)=0$ for all $i>0$ when $X$ is a  smooth
projective Fano variety. Although Kodaira
vanishing can fail in positive characteristic, even for smooth Fano varieties \cites{HaboushLauritzen,LauritzenRao,KovacsCanonical,Totaro}, none of these examples has
nonzero higher cohomology of the structure sheaf. Whether a smooth Fano variety with $H^i(X,\cO_X)\neq 0$ for $i>0$
exists has been an open question considered by many authors (cf.\
\cite{KovacsCanonical}). This question also relates to the study of rational
points on Fano varieties over finite fields (see \cite{Esnault}).

In this article, we construct what is, to the best of our knowledge, the first example
in which this vanishing fails.

\begin{theoremIntro}\label{thm:main}
Over $k=\overline{\bF}_2$, there exists a smooth projective integral
Fano fourfold $X$ such that $H^1(X,\cO_X)\simeq k$.
\end{theoremIntro}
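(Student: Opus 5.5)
The plan is to realize $X$ as an explicit divisor in a smooth projective toric ambient space $P$, and to verify each property by direct computation over $\bF_2$. Here $P$ is a projective bundle over a product of projective spaces, or a weighted or toric variety of dimension five. The ambient space and equation will be chosen so that the nonvanishing of $H^1$ comes from a characteristic-two phenomenon.

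\textbf{Guiding heuristic.} Since $X$ will be Fano and rationally chain connected, its Albanese is trivial. Hence any nonzero class in $H^1(X,\cO_X)$ must come from a non-reduced $\mathrm{Pic}^0_X$ (infinitesimal, such as $\alpha_2$ or $\mu_2$). Concretely, this means a nonzero element of $H^1(X,\cO_X)$ killed by Frobenius, or at least one not arising from a line bundle.

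\textbf{Why a naive ambient space fails.} For a hypersurface or complete intersection in a product of projective spaces, the Koszul/Künneth computation shows that the relevant group $H^{c+1}(P,\cO_P(-D))$ can only be nonzero when $D$ is at least as large as $-K_P$ in enough factors. This contradicts ampleness of $-K_X=(-K_P-D)|_X$. So I would use a twisted ambient space: a $\bP^n$-bundle $P=\bP(\mathcal{E})$ over a small base, with $\mathcal{E}$ chosen so that the pushforwards $R^jf_*\cO_P(-X)$ acquire cohomology. I would then take $X\in|\cO_P(2)\otimes f^*L|$, a family of quadrics (or of double covers). In characteristic $2$, such a family can be wildly degenerate on every fibre while the total space stays smooth. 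This is the same mechanism as for quasi-elliptic fibrations, and it is also why there will be no free rational curves.

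\textbf{Steps, in order.} First, write the explicit equation with coefficients in $\bF_2$. Second, compute $H^1(X,\cO_X)$ from
\[
0\to\cO_P(-X)\to\cO_P\to\cO_X\to 0 .
\]
Since $H^1(P,\cO_P)=H^2(P,\cO_P)=0$, this gives $H^1(X,\cO_X)\simeq H^2(P,\cO_P(-X))$. I would evaluate the right-hand side by the Leray spectral sequence for $f$, using relative duality to get $R^nf_*\cO_P(-m)\simeq (f_*\cO_P(m-n-1))^\vee\otimes\det$. I will arrange that exactly one term survives, equal to $H^1$ of a line bundle on the base that is one-dimensional. Third, check Fano by writing $-K_X=(-K_P-X)|_X$ via adjunction and showing the restricted class lies in the interior of the nef cone of $P$; for a toric or projective-bundle $P$ this is explicit. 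Fourth, check integrality from irreducibility of the defining polynomial, which follows from the fibrewise structure.

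\textbf{Main obstacle: smoothness.} The equation must be special enough to force the pathological cohomology, yet $X$ must be smooth. I would verify smoothness by the Jacobian criterion on each standard affine chart. In characteristic $2$ the partial derivatives of the square terms vanish, so the singular locus is cut out by a small system. I would show this system has no solutions over $k$, by a Gröbner basis or saturation computation with coefficients in $\bF_2$, which is valid over $\overline{\bF}_2$ since the ideal is defined over $\bF_2$. If the first choice of coefficients fails, I would search over coefficients in $\bF_2$ or $\bF_4$ until the Jacobian ideal becomes irrelevant. The cohomology and Fano computations depend only on the linear system, so they are unaffected by this search.
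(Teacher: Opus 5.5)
\textbf{The proposal has a structural gap: the framework you set up cannot produce the example.} It also never actually produces $X$, since the decisive steps (``I will arrange that exactly one term survives'' and the coefficient search) are placeholders.

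As you note, $h^1(X,\cO_X)=h^2(P,\cO_P(-X))$ depends only on the class of $X$, so no characteristic-two degeneracy of the equation can create the class. Your Fano check, showing $A:=-K_P-X$ is ample on $P$, then kills it outright: $H^2(P,\cO_P(-X))\simeq H^3(P,\cO_P(-A))^\vee=0$ by Kodaira vanishing. That vanishing holds on smooth projective toric varieties in every characteristic, because they are Frobenius split.

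The projective-bundle version fails even without that shortcut. Let $f\colon P=\bP_B(\mathcal{E})\to B$ and $X\in|\cO_P(2)\otimes f^*L|$.
\begin{itemize}
\item If $\operatorname{rk}\mathcal{E}\ge 3$, then $\cO_{\bP^n}(-2)$ is acyclic. Hence $\cO_P(-X)$ is acyclic and $H^1(X,\cO_X)\simeq H^1(P,\cO_P)=0$, so no term survives.
\item If $\operatorname{rk}\mathcal{E}=2$, put $N:=R^1f_*\cO_P(-X)$, a line bundle. Then $0\to\cO_B\to f_*\cO_X\to N\to 0$ and $R^1f_*\cO_X=0$. Adjunction and relative duality give $\omega_X\simeq (f|_X)^*(\omega_B\otimes N^{-1})$. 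Fano-ness forces $f|_X$ to be finite, since $X$ cannot contain a fibre, and forces $A:=-K_B+N$ to be ample. Hence $H^1(X,\cO_X)\simeq H^1(B,\omega_B\otimes A)=0$, again by Kodaira vanishing on the toric base.
\end{itemize}
Whether $X\to B$ is separable or purely inseparable plays no role.

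\textbf{The missing idea is to reverse the roles in the inseparable double cover and to leave projective $F$-split ambient spaces altogether.} In the paper, $X$ is the \emph{base}, not the cover, of a purely inseparable double cover $\pi\colon H\to X$. Here $H=V(Q,C)\subset\bP^6$ is a \emph{singular} complete intersection with $H^1(H,\cO_H)=0$. The variety $X=H/\cF$ is its quotient by the foliation induced by the $p$-closed derivation $D=\sum x_i^2\partial_{x_i}$, which is regular on an open set $U$ of the cubic $V(C)$. Thus $X$ is a Cartier divisor in the non-proper smooth fivefold $U/\cF$, and no vanishing theorem on an ambient space interferes.

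The sequence $0\to\cO_X\to\pi_*\cO_H\to K\to 0$, with $K$ ample and $H^1(X,\pi_*\cO_H)=0$, gives $H^1(X,\cO_X)\simeq H^0(X,K)=\ker(D\colon R_1\to R_2)=k\cdot t$. So the class comes from a global section of an \emph{ample} line bundle. In your setup it would have to come from $H^1$ of an adjoint bundle $\omega_B\otimes A$ on a base satisfying Kodaira vanishing, which is zero.

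The remaining work in the paper is twofold. First, smoothness of $X$ at the images of the $63$ singular points of $H$, checked along the invariant lines $t\mapsto[e:t]$. Second, Fano-ness via Ekedahl's formula, which gives $\pi^*\omega_X\simeq\cO_H(-1)$.
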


This example has the smallest possible dimension: over algebraically closed
fields, the vanishing always holds in dimensions at most three (see \cites{ShepherdBarron, KawakamiTanakaI, KawakamiTanakaII}). Also, for previously constructed non-smooth examples we refer to \cites{KollarNonrational, SchroerWeakDelPezzo, Maddock}.  \\

Note that the above non-vanishing automatically implies that $X$ is not separably rationally connected (see 
\cite{Gounelas}), thereby also answering a question
of Koll\'ar (cf. 
\cite{Zhu}*{Question~2}). However, more is true: our example admits no free rational
curve and is therefore not separably uniruled.

\begin{theoremB}
The fourfold $X$ admits no free rational curve, and hence is not separably
uniruled.
\end{theoremB}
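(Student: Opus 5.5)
We have not yet seen the construction of $X$, so the plan has to be phrased in terms of the structure we expect it to have. The natural mechanism for $H^1(X,\cO_X)\neq 0$ on a Fano variety in characteristic $2$ is a nonzero global $1$-form, or more generally a foliation coming from the Frobenius and Picard-type structure. So we assume $X$ carries one of the following:
\begin{itemize}
\item a nonzero global section of $\Omega^1_X$, or
\item a rank-one saturated subsheaf $L\subset T_X$ (a $p$-closed foliation) such that $T_X/L$, or some quotient $Q$ of $T_X$, has negative degree on every rational curve.
\end{itemize}
For example, if $X$ is built as a purely inseparable or $\alpha_p$/$\mu_p$-type quotient of, or cover over, a simpler variety $Y$ (say a projective bundle), we expect a generically surjective map $\phi\colon\Omega^1_X\to M$ with $M$ a line bundle such that $M\cdot C\le -?$. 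Equivalently, there is an injection $M^{-1}\hookrightarrow T_X$ with positive-but-wrong-direction control.

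The approach is to use a nonzero global $1$-form $\omega$ on $X$, or a nonzero map $\Omega^1_X\to N$ with $N$ numerically trivial or anti-nef. Suppose $f\colon\bP^1\to X$ is free, so that $f^*T_X\cong\bigoplus\cO(a_i)$ with all $a_i\ge0$. Then $f^*\Omega^1_X\cong\bigoplus\cO(-a_i)$, and the pullback $f^*\omega$ gives a section of $f^*\Omega^1_X$ that must vanish. Since free curves cover a dense open subset (they deform, and pass through a general point), $\omega$ would vanish at a general point, hence everywhere, which is a contradiction.

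The key steps, in order, are as follows.
\begin{enumerate}
\item Produce $\omega$. The natural candidate is the class in $H^1(X,\cO_X)\simeq k$. In characteristic $p$ the Frobenius acts on this one-dimensional space, and $F^*$ is either bijective (ordinary) or zero (supersingular, i.e. $\alpha_p$-type). The exact sequence
\[
0\to\cO_X\to F_*\cO_X\xrightarrow{d}F_*B^1\to0
\]
shows that a class killed by $F^*$ yields a nonzero exact form $df\in H^0(X,B^1_X)\subset H^0(X,\Omega^1_X)$. If instead $F^*$ is bijective, $\mathrm{Pic}^0$ has a $\mu_p$ part, giving an étale $\mathbb{Z}/p$-cover, which is impossible for a Fano variety that is rationally chain connected. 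Hence I expect the $\alpha_p$ case, and so a nonzero closed $1$-form $\omega$. I would verify this using the explicit construction, likely by checking that the torsor is the $\alpha_2$-torsor visible in the construction.
\item Show that the image of the evaluation map of a free curve family is dense. This is standard: the deformations of a free $f$ dominate $X$ (Kollár, II.3).
\item Conclude as above. A nonzero global $1$-form on $\bigoplus\cO(-a_i)$ with $a_i\ge0$ must be constant in the $a_i=0$ summands. Here more care is needed: sections of $f^*\Omega^1$ need not vanish when some $a_i=0$. So I would use a very free argument in that case, or better, use that $f^*\omega$ restricted to $T_{\bP^1}\to f^*T_X$ composed gives a map $\cO(2)\to\cO$, which vanishes. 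This shows $\omega$ vanishes on tangent directions of all free curves. Since free curves through a general point $x$, via deformations fixing $x$ and smoothing, have tangent directions spanning a cone whose span is the image of $H^0(f^*T_X(-1))$, I need the span to be all of $T_xX$. That fails in general for merely free curves.
\end{enumerate}

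The main obstacle is therefore step 3. The fallback plan is to exploit the specific geometry. I expect $\omega$ to define a foliation $\mathcal{F}=\ker\omega$, and I would show that $\mathcal{F}$ is algebraically integrable with non-uniruled leaves, or that $T_X/\mathcal{F}$ is negative on rational curves. Concretely, I would compute $f^*(T_X/\mathcal{F})\cong\cO(d)$ with $d=-\deg f^*N_\omega$. If the explicit construction gives $\omega\in H^0(\Omega^1_X\otimes N)$ with $N$ anti-ample, or with $N\cdot C<0$ for every rational curve $C$, then a free curve would have a nonzero map $f^*T_X\to f^*N^{-1}$ that is generically surjective at a general point. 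This is because a free curve through a general point can be chosen with $\omega\ne0$ along it. That map would force $\deg f^*N^{-1}\ge0$, contradicting $N\cdot C<0$ via the surjection from a nonnegative bundle, which gives a nonzero section of a negative line bundle. Verifying the required sign of $N$ on all rational curves, for instance via the fibration structure of the explicit construction, is where I expect the real work to lie.
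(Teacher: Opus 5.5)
There is a genuine gap: the object that actually obstructs free curves is never constructed. Your main line (steps 1--3) cannot work, as you partly notice yourself. An untwisted global $1$-form is no obstruction: on $\bP^1\times E$ in characteristic zero, $\bP^1\times\{q\}$ is free although $H^0(\Omega^1)\neq 0$. So extracting an exact form from the Frobenius-killed class in $H^1(X,\cO_X)$ does not help, and neither the $\mu_p$/\'etale-cover dichotomy nor the spanning of tangent directions is needed.

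Your final paragraph names the right mechanism, namely a twisted form $\omega\in H^0(X,\Omega^1_X\otimes N)$ with $N$ negative on curves, i.e.\ an injection of a positive line bundle $N^{-1}\hookrightarrow\Omega^1_{X/k}$. But you state it only conditionally, and producing it is the entire content of the proof. In the paper it comes from the purely inseparable double cover $\pi\colon H\to X$. Take $K\simeq\pi_*\cO_H/\cO_X$, identified via $D$ with $\ker(D\colon\pi_*\cO_H(1)\to\pi_*\cO_H(2))$. The rule $[a]\mapsto d(a^2)$ is well defined, since $a^2$ is invariant and $d(c^2)=0$ for $c\in\cO_X$. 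It satisfies $d((ca)^2)=c^2d(a^2)$, so it gives $\beta\colon F^*K=K^{\otimes2}\to\Omega^1_{X/k}$. Locally, writing $A=B[z]/(z^2-b)$, one has $\beta([v]^{\otimes2})=db$, which is nonzero exactly where $H$ is smooth, hence away from the finite set $\pi(\operatorname{Sing}H)$. The positivity is $\pi^*K\simeq\cO_H(1)$, checked locally via $D(v)=x_ic$ with $c$ a unit, so $K$ is ample. It comes from the degree-one twist of the derivation $D$, not from any fibration structure.

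Your signs also need fixing, since the direction is exactly what matters. A map $\Omega^1_X\to M$ with $M$ negative (equivalently a positive subsheaf $M^{-1}\subset T_X$) obstructs nothing, because $\mathrm{Hom}(\cO(-a),\cO(n))=H^0(\cO(n+a))\neq0$ as soon as $a\ge -n$. With $\omega\in H^0(\Omega^1_X\otimes N)$, the relevant map on a free curve is $f^*T_X\to f^*N$, not $f^*T_X\to f^*N^{-1}$. The contradiction is that a globally generated bundle admits no nonzero map to a line bundle of negative degree. The paper phrases this dually: the composite $f^*K^{\otimes2}\to f^*\Omega^1_X\hookrightarrow\cO_{\bP^1}^{\oplus m}$ is nonzero, which is impossible since $\deg f^*K>0$.

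Once $\beta$ exists, either way of ensuring $f^*\beta\neq0$ is fine. The paper's fibrewise nonvanishing outside $\pi(\operatorname{Sing}H)$ handles every nonconstant $f$. Your alternative of moving a free curve through a general point also works, because deformations of a free curve dominate $X$.
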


\DIFaddend The idea of the construction is as follows. First, we choose $\theta\in\bF_{128}\subset k$
of degree seven over $\bF_2$, and define the polynomials in seven variables $x_0,\ldots,x_5, t$:
\begin{align*}
 Q_6&:=x_0x_1+x_2x_3+x_4x_5+\theta x_0^2+\theta^2x_1^2+\theta^3x_2^2+\theta^4x_3^2+\theta^5x_4^2+\theta^6x_5^2,\\
 Q&:=Q_6+t^2,\\
 C&:=x_0^2x_1+x_0x_1^2+x_2^2x_3+x_2x_3^2
       +x_4^2x_5+x_4x_5^2.
\end{align*}
Set $H:=V(Q,C)\subset\bP^6$. Although $H$ is singular, it lies in a
smooth open subset $U$ of $T:= V(C)$ carrying a regular rank-one foliation $\cF$ induced by (see Proposition \ref{prop:foliation}):
\[
 D:=
  \sum_{i=0}^5x_i^2\frac{\partial}{\partial x_i}.
\]
We form the smooth quotient $Z:=U/\cF$.
The invariant divisor  $H$ descends to a Cartier divisor $X\subset Z$,
and the restricted morphism $H\to X$ is the quotient by the induced
derivations.\\

\[
\begin{tikzpicture}[
  scale=1.2,
  transform shape,
  x=1cm,y=1cm,
  every node/.style={font=\small,text=ink,inner sep=2pt},
  ambient/.style={draw=ink!36,line width=.45pt,rounded corners=2pt},
  region/.style={draw=ink!65,fill=paperwash,line width=.55pt},
  divisor/.style={draw=accent,fill=white,line width=.9pt},
  quotient/.style={-{Stealth[length=2.15mm,width=1.45mm]},draw=ink,line width=.65pt},
  missing/.style={draw=ink!75,fill=white,line width=.6pt}
]
    \draw[ambient] (0,2.50) rectangle (12.8,7.12);
  \node[anchor=north west,font=\large] at (.28,6.92) {$\mathbf P^6$};

    \path[region]
    (.68,4.75)
    .. controls (.68,6.15) and (2.05,6.55) .. (6.40,6.55)
    .. controls (11.15,6.55) and (12.12,6.35) .. (12.12,4.75)
    .. controls (12.12,3.30) and (10.54,2.96) .. (6.40,2.96)
    .. controls (2.25,2.96) and (.68,3.30) .. cycle;
  \node[anchor=west] at (1.75,5.83)
    {$T=V(C)$\quad {\footnotesize singular at $v$}};
  \node[anchor=west] at (1.75,5.26)
    {$U=T\setminus(\{v\}\cup\Sigma)$\quad {\footnotesize smooth}};
  \node[anchor=west,font=\footnotesize] at (7.55,5.83)
    {$v=[0:\cdots:0:1]$};
  \node[anchor=west,font=\footnotesize] at (7.55,5.26)
    {$\Sigma=\{[e:0]\mid 0\ne e\in\mathbf F_2^6\}$};

    \draw[divisor] (5.02,4.02) ellipse [x radius=3.42,y radius=.82];
  \node[text=accent] at (5.02,4.27)
    {$H=V(Q,C)$\quad Fano fourfold};
  \foreach \x/\y in {2.94/3.67,3.28/3.55,3.62/3.66}
    \fill[singularpoint] (\x,\y) circle[radius=.052];
  \node[anchor=west,font=\footnotesize] at (3.92,3.61)
    {63 singular points};

    \draw[missing] (11.00,4.08) circle[radius=.057];
  \node[anchor=west,font=\footnotesize] at (11.16,4.08) {$v$};
  \foreach \x/\y in {9.27/4.35,9.73/4.08,9.25/3.81}
    \draw[missing] (\x,\y) circle[radius=.057];
  \node[anchor=west] at (10.04,4.08) {$\Sigma$};

    \path[region]
    (.68,.16)
    .. controls (.68,1.05) and (2.15,1.30) .. (6.40,1.30)
    .. controls (10.64,1.30) and (12.12,1.05) .. (12.12,.16)
    .. controls (12.12,-.73) and (10.63,-1.00) .. (6.40,-1.00)
    .. controls (2.16,-1.00) and (.68,-.73) .. cycle;
  \draw[divisor] (5.02,.16) ellipse [x radius=3.42,y radius=.68];
  \node[text=accent] at (5.02,.16)
    {$X=H/\mathcal F$\enspace smooth Fano fourfold};
  \node[align=center] at (10.00,.03)
    {$Z=U/\mathcal F$\\[2pt] {\footnotesize smooth fivefold}};

    \draw[quotient,draw=accent] (5.02,3.10) -- (5.02,.94)
    node[pos=.49,anchor=west,xshift=5pt,text=accent] {$\pi=\varphi|_H$};
  \draw[quotient] (10.72,3.12) -- (10.72,1.28)
    node[pos=.51,anchor=east,xshift=-4pt] {$\varphi$}
    node[pos=.51,anchor=west,xshift=4pt,font=\footnotesize] {degree $2$};
\end{tikzpicture}
\]

\subsection*{AI statement}
The example in this article was discovered with assistance from
ChatGPT~6 Astra. ChatGPT was subsequently also used to edit
the LaTeX manuscript. The authors take full responsibility for the contents.

\subsection*{Acknowledgements}
Joe Waldron is supported by NSF CAREER grant 2440240 and NSF grant 2401279. Jakub Witaszek is supported by NSF CAREER grant 2540921 and NSF grant 2401360.

\DIFaddend \section{Regular foliations in characteristic two}\label{sec:foliations}

Throughout this section, we fix $k :=\overline{\bF}_2$. For a smooth $k$-variety $U$, we write
$T_U:=\mathcal{D}er_k(\cO_U,\cO_U)$ and
$\omega_U:=\det\Omega^1_{U/k}$.
\begin{definition}\label{def:foliation}
A \emph{regular rank-one foliation} on $U$ is a line subbundle
$\cF\subset T_U$, with locally free quotient, closed under Lie
brackets and squaring of derivations.
\end{definition}

Here the Lie bracket is defined by
$[\delta,\delta'](f) :=\delta(\delta'(f))-\delta'(\delta(f))$, and
the square of a derivation $\delta$ means $\delta^2(f):=\delta(\delta(f))$. Both operations produce
derivations in characteristic two. For a local generator $\delta$ of
$\cF$ and local functions $a,b$, one has
\[
 [a\delta,b\delta]=(a\delta(b)-b\delta(a))\delta,
 \qquad (a\delta)^2=a\delta(a)\delta+a^2\delta^2.
\]
Thus bracket closure is automatic in rank one, and to verify the square closure it is enough to check it  on local generators $\delta$. Here,  the regularity means that a local
generator is nonzero in every fibre of the tangent sheaf.

\begin{theorem}[{\cite{Ekedahl}*{I, Proposition~1.1}}]
\label{thm:ekedahl}
Let $\cF\subset T_U$ be a regular rank-one foliation on a smooth
$k$-variety $U$. The ringed space with underlying space $U$ and
structure sheaf $\cO_Z :=\cO_U^{\cF}$, consisting of functions
annihilated by every local section of $\cF$, is a smooth $k$-variety,
denoted $Z=U/\cF$. The natural morphism
$\varphi \colon U\to Z$ is a finite flat  universal homeomorphism of degree two.
The absolute Frobenius factors as
\[F \colon U\xrightarrow{\varphi}Z\xrightarrow{\psi}U,
\]
where $\psi$ is also finite flat. There is an exact sequence
\[
 0\to\cF\to T_U\to \varphi^*T_Z\to\cF^{\otimes2}\to0,
\]
and a canonical isomorphism 
\begin{equation}\label{eq:canonical-pullback}
 \varphi^*\omega_Z\simeq\omega_U\otimes\cF^{-1}.
\end{equation}
\end{theorem}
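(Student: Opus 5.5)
The plan is to argue locally and reduce everything to a normal form for the generator. Since regularity means that a local generator $\delta$ of $\cF$ is nonvanishing at a point $p\in U$, and $\cF$ is closed under squaring, we have $\delta^2=g\delta$ for a local function $g$.

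\textbf{Step 1: normal form.} I want to show that, after shrinking around $p$, there are étale coordinates $y_1,\dots,y_n$ and a generator $\delta'$ of $\cF$ with $\delta'=\partial/\partial y_1$.
\begin{itemize}
\item Choose a function $y$ with $\delta(y)$ a unit near $p$; this is possible because $\delta(p)\neq 0$.
\item Replace $\delta$ by $\delta':=\delta(y)^{-1}\delta$. Then $\delta'(y)=1$, and $\delta'^2=h\delta'$ by the square closure.
\item Applying this to $y$ gives $h=\delta'^2(y)=\delta'(1)=0$, so $\delta'^2=0$.
\item Now $\delta'$ is $p$-nilpotent with a "slice" $y$. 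The standard Jacobson/Cartier argument then produces the coordinates. For any function $f$, the function $f-y\,\delta'(f)$ is killed by $\delta'$, because $\delta'(y\delta' f)=\delta' f+y\delta'^2f=\delta' f$.
\item Hence $\cO_U=\cO_U^{\delta'}\oplus y\,\cO_U^{\delta'}$ locally, and $\cO_U^{\delta'}\supset \cO_U^2$.
\item Pick local coordinates $y=y_1,y_2,\dots,y_n$ at $p$ with $\delta' y_i=0$ for $i\ge2$, for example by replacing $y_i$ with $y_i-y\,\delta'(y_i)$; the differentials remain independent at $p$.
\end{itemize}
Then $\delta'=\partial/\partial y_1$, and $\cO_U^{\cF}$ is generated over $\cO_U^2$ by $y_2,\dots,y_n$. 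Thus $\cO_Z$ looks locally like $k[y_1^2,y_2,\dots,y_n]$, suitably completed or étale-localized.

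\textbf{Step 2: global consequences.} From this local model:
\begin{itemize}
\item $Z$ is smooth of dimension $n$, with coordinates $y_1^2,y_2,\dots,y_n$.
\item $\cO_U$ is free of rank $2$ over $\cO_Z$ with basis $1,y_1$, so $\varphi$ is finite flat of degree $2$.
\item Since $\cO_U^2\subset\cO_Z$, Frobenius factors as $F=\psi\circ\varphi$. Here $\psi$ is a finite morphism between smooth varieties of the same dimension, hence flat by miracle flatness.
\item $\varphi$ is a universal homeomorphism because it is finite, surjective and purely inseparable ($f^2\in\cO_Z$).
\item The $k$-variety structure follows since $\cO_Z\supset\cO_U^2$ is finite over a variety.
\end{itemize}

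\textbf{Step 3: the sequence and the canonical bundle.} The map $T_U\to\varphi^*T_Z$ is the dual of $\varphi^*\Omega_Z\to\Omega_U$. In the local coordinates $d(y_1^2)\mapsto 0$ and $dy_i\mapsto dy_i$, so this map has kernel spanned by $\partial_{y_1}$ (which is $\cF$) and cokernel a line bundle spanned by $\partial/\partial(y_1^2)$.
\begin{itemize}
\item To identify the cokernel canonically with $\cF^{\otimes 2}$, I would check how $\partial/\partial(y_1^2)$ transforms under a change of generator $\delta\mapsto u\delta$ with $\delta(y_1)=1$. The new slice coordinate scales to first order, so $y_1^2$ scales by the square, giving a transition function $u^2$.
\item Equivalently, the map $\varphi^*T_Z\to\cF^{\otimes2}$ can be written intrinsically as $\delta^{\otimes 2}$ paired via $\delta\otimes\delta\mapsto(f\mapsto \delta(f)^2)$.
\end{itemize}
Taking determinants in the four-term exact sequence gives $\det T_U\otimes\cF^{2}\simeq\varphi^*\det T_Z\otimes\cF$. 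Hence $\varphi^*\omega_Z\simeq\omega_U\otimes\cF^{-1}$.

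\textbf{Main obstacle.} I expect the hardest part to be making the identification of the cokernel with $\cF^{\otimes 2}$ canonical and independent of the chosen coordinates, together with justifying the coordinate choice in Step 1 in the algebraic (étale-local) rather than formal setting. For the latter I would work with completions and use faithful flatness, together with the fact that $\cO_U^{\cF}$ is defined as a kernel and so commutes with flat base change.
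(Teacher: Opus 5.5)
Your proposal is correct and is the standard argument behind Ekedahl's Proposition~1.1. The paper does not reprove the theorem; it only cites it. The local normal form $A=B\oplus Bv$, $v^2\in B$, $\partial(a+cv)=c$ recorded in Remark~\ref{rem:local-quotient} is exactly what your Step~1 produces, with $v=y_1$ and $\partial=\delta'$.

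One small fix is needed in Step~1: translate so that $y(p)=0$ before replacing $y_i$ by $y_i-y\,\delta'(y_i)$. Otherwise the term $y(p)\,d(\delta'y_i)(p)$ can destroy the independence of the differentials at $p$.

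The step you flag as the main obstacle needs no completions. Since $A\cong B[z]/(z^2-y_1^2)$, the \'etale map $(y_1,\dots,y_n)$ on $\Spec A$ is the base change of $(y_1^2,y_2,\dots,y_n)$ on $\Spec B$ along the finite flat surjection $(z_1,\dots,z_n)\mapsto(z_1^2,z_2,\dots,z_n)$ of $\mathbb{A}^n$. So \'etaleness of $(y_1^2,y_2,\dots,y_n)$ follows by flat descent. This gives the smoothness of $Z$, and also the basis $d(y_1^2),dy_2,\dots,dy_n$ of $\Omega_Z$ that your Step~3 uses.
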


\begin{remark}\label{rem:local-quotient}
We will use the following local description from
\cite{Ekedahl}*{I, Proposition~1.1(ii)}. Locally on $Z$, write
 $\varphi^{-1}(\Spec B)=\Spec A$. There is an element $v\in A$ with
$v^2\in B$ such that
\begin{equation}\label{eq:local-cover}
 A=B\oplus Bv\qquad\text{as }B\text{-modules}.
\end{equation}
On $\Spec A$, the foliation is generated by the $B$-linear
derivation $\partial:A\to A$ defined by
$\partial(a+cv)=c$ for $a,c\in B$.
Writing $d=\dim U$, in suitable \'etale coordinates the quotient
map is given by
\[
 (v,x_2,\ldots,x_d)\longmapsto(v^2,x_2,\ldots,x_d).
\]
\end{remark}

We will restrict $\varphi$ to a closed subscheme of $U$ preserved by the foliation. The
following lemma explains why that restriction remains a finite flat
quotient.

\begin{lemma}\label{lem:invariant-subschemes}
In the setting of Theorem~\ref{thm:ekedahl}, let
$\cI\subset\cO_U$ be a coherent ideal sheaf preserved by every local
section of $\cF$. There is a unique ideal sheaf $\cJ\subset\cO_Z$
such that $\cI=\cJ\cO_U$. If $V:=V(\cI)$ and $W:=V(\cJ)$, then
\[
 V=U\times_Z W.
\]
The induced morphism $V\to W$ is a finite flat universal homeomorphism of degree
two, and $\cO_W$ is the sheaf of invariant functions on $V$ under
the restricted derivations.
\end{lemma}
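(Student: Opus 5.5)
The plan is to work locally on $Z$, using the description in Remark~\ref{rem:local-quotient}, and then glue. Since $\varphi$ is a homeomorphism, an ideal sheaf on $Z$ is determined by its sections over affine opens $\Spec B$, and we may take $\varphi^{-1}(\Spec B)=\Spec A$ with $A=B\oplus Bv$ and $\cF$ generated by $\partial(a+cv)=c$. Then $\cI$ corresponds to an ideal $I\subset A$ with $\partial(I)\subset I$. Note that $B=\ker\partial$ (this is the invariant subring), so $\cO_Z=\cO_U^{\cF}$ as in Theorem~\ref{thm:ekedahl}.

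\textbf{Step 1 (construction of $\cJ$).} I will set $J:=I\cap B=I^{\partial}$, the invariant elements of $I$, and show $I=JA$. The inclusion $JA\subset I$ is clear. Conversely, take $f=a+cv\in I$ with $a,c\in B$. Then $\partial f=c\in I$, so $c\in J$. Hence $cv\in JA\subset I$, and therefore $a=f-cv\in I\cap B=J$. So $f\in JA$, which gives
\[
 I=J\oplus Jv .
\]
For uniqueness, suppose $J'\subset B$ is any ideal with $J'A=I$. Then $I=J'\oplus J'v$, because $A$ is free over $B$ with basis $1,v$. Intersecting with $B$ gives $J'=I\cap B=J$. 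Uniqueness also ensures that the local $J$'s agree on overlaps (equivalently, $\cJ=\cI\cap\cO_Z$ is defined globally), so they glue to a coherent ideal $\cJ\subset\cO_Z$. Coherence holds because $\cJ=\ker(\partial)\cap\cI$ is a kernel of $\cO_Z$-linear maps between coherent $\cO_Z$-modules, using that $\varphi$ is finite.

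\textbf{Step 2 (fibre product and flatness).} We have $U\times_Z W=\Spec A\otimes_B B/J=\Spec A/JA=\Spec A/I=V$. The morphism $V\to W$ is a base change of $\varphi$, so it inherits being finite, flat of degree two (indeed $A/I=B/J\oplus (B/J)\bar v$ is free of rank two), and a universal homeomorphism. All three properties are stable under base change.

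\textbf{Step 3 (invariants).} Since $\partial(I)\subset I$, the derivation $\partial$ descends to $\bar\partial$ on $A/I=B/J\oplus(B/J)\bar v$, where it acts by $\bar a+\bar c\bar v\mapsto\bar c$. Its kernel is therefore exactly $B/J$, because the decomposition is direct. Hence $\cO_W$ is the sheaf of invariant functions on $V$.

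The main point needing care is Step 1, namely that $I$ is generated by invariants. This is exactly where invariance of $I$ under $\partial$ and the explicit $B$-basis $\{1,v\}$ are used. Everything else is formal base change.
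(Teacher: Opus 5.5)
Your proposal is correct and takes essentially the same route as the paper: work locally with $A=B\oplus Bv$, set $J=I\cap B$, use $\partial(a+cv)=c\in I$ to get $I=J\oplus Jv$, read off the invariants of $A/I$ as $B/J$, and obtain the remaining properties by base change from $\varphi$. Your uniqueness argument via freeness of $A$ over $B$ and your remark on coherence spell out details the paper leaves implicit.
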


\begin{proof}
Use the local description in Remark~\ref{rem:local-quotient}, and let
$I\subset A$ be the ideal defining $V$. Set $J :=I\cap B$. We can write an arbitrary element of $I$ as  $a+cv$ with $a,c\in B$. Then invariance under $\partial$ gives
$c=\partial(a+cv)\in J$. Subtracting $cv$ gives $a\in J$.
Consequently,
\[
 I=JA=J\oplus Jv \qquad
 A/I=(B/J)\oplus(B/J)v.
\]
The invariants of the last ring are exactly $B/J$. The ideals
$J=I\cap B$ glue on $Z$ and are uniquely determined by $I$.
The remaining assertions follow by base change from  $\varphi$.
\end{proof}

\section{The construction}\label{sec:construction}

Below we explain the key construction of the paper.
\begin{construction}\label{construction:example}
Let $\theta,Q_6,Q,C$ be as in Section~\ref{sec:introduction}, and set
\[
 S :=k[x_0,\ldots,x_5,t] \qquad R :=S/(Q,C).
\]
Write $S_n$ and $R_n$ for their degree-$n$ parts, and put
$T :=V(C)\subset\bP^6$ and $H :=V(Q,C)\subset T$.
Consider the degree-one derivation
\begin{equation}\label{eq:derivation}
 D:=
  \sum_{i=0}^5x_i^2\frac{\partial}{\partial x_i}.
\end{equation}
Thus $D(x_i)=x_i^2$, $D(t)=0$, and $D(S_n)\subset S_{n+1}$.
\end{construction}

The coefficients of $Q_6$ ensure that, for every
$0\ne e=(e_0,\ldots,e_5)\in\bF_2^6$,
\begin{equation}\label{eq:binary}
 Q_6(e)=\sum_{j=0}^2e_{2j}e_{2j+1}
          +\sum_{i=0}^5e_i\theta^{i+1}\ne0.
\end{equation}
Indeed, $1,\theta,\ldots,\theta^6$ are linearly independent over
$\bF_2$, and some $e_i$ is nonzero.

\begin{proposition}\label{prop:complete-intersection}
The hypersurface $T$ is smooth away from its vertex
$[0:\cdots:0:1]$. The scheme $H$ is a normal integral complete
intersection of dimension four. Its singular points are exactly
\begin{equation}\label{eq:singular-points}
 P_e=[e:s_e],\qquad
 0\ne e\in\bF_2^6,\qquad s_e^2=Q_6(e).
\end{equation}
In particular, there are $63$ singular points, and each $s_e$ is
nonzero.
\end{proposition}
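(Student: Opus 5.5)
The plan is to apply the Jacobian criterion directly. In characteristic two, the partial derivatives of $C$ and $Q$ are simple.

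\textbf{Step 1: $T$ is smooth away from the vertex.} For $C$ we get $\partial C/\partial x_{2j}=x_{2j+1}^2$ and $\partial C/\partial x_{2j+1}=x_{2j}^2$, because the terms $2x_{2j}x_{2j+1}$ vanish. Also $\partial C/\partial t=0$. So $dC$ vanishes exactly where $x_0=\cdots=x_5=0$, which is the vertex $v=[0:\cdots:0:1]$. Hence $T$ is smooth away from $v$.

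\textbf{Step 2: $H$ is a complete intersection of dimension four.} Since $C$ does not involve $t$, $T$ is the cone over a cubic in $\bP^5$. The polynomial $Q=t^2+Q_6$ is irreducible. Indeed, it is monic of degree two in $t$, so a factorization would force $Q_6$ to be the square of a linear form, and that is impossible because $Q_6$ contains the monomial $x_0x_1$. Moreover $Q$ does not divide $C$, since $C$ is free of $t$. Thus $Q,C$ form a regular sequence, and $H$ is a complete intersection of pure dimension four. In particular $H$ is Cohen--Macaulay.

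\textbf{Step 3: the Jacobian computation.} For $Q$ we have $\partial Q/\partial x_{2j}=x_{2j+1}$ and $\partial Q/\partial x_{2j+1}=x_{2j}$, because the $\theta$-terms are squares. Also $\partial Q/\partial t=0$. The vertex does not lie on $H$, since $Q(v)=1$. Hence on $H$ the vector $x=(x_0,\ldots,x_5)$ is nonzero, and so $dQ\neq0$ there.

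A point of $H$ is therefore singular exactly when $dC=\lambda\, dQ$ for some $\lambda$. Comparing coordinates, this says $x_i^2=\lambda x_i$ for all $i$. Here $\lambda\neq0$, since otherwise $x=0$. So each $x_i\in\{0,\lambda\}$, and after rescaling $x=e\in\bF_2^6\setminus\{0\}$.

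Conversely, every such point with $Q=0$ lies on $H$ and is singular. This holds because $C(e)=\sum e_{2j}e_{2j+1}(e_{2j}+e_{2j+1})=0$ automatically over $\bF_2$, and the rank condition holds with $\lambda=1$. The equation $Q=0$ becomes $t^2=Q_6(e)$. Since Frobenius is bijective on $k$, this has a unique solution $s_e$, and $s_e\neq 0$ by \eqref{eq:binary}. This yields exactly the $63$ points $P_e$.

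\textbf{Step 4: normality and integrality.} The singular locus is finite, so it has codimension four in $H$. Together with Cohen--Macaulayness, Serre's criterion ($R_1+S_2$) gives that $H$ is normal. A positive-dimensional complete intersection in projective space is connected (Hartshorne). A connected normal noetherian scheme is integral.

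I do not expect a serious obstacle. The points needing care are using the Jacobian criterion in its correct form for complete intersections (rank $2$ of the Jacobian at a point of the pure $4$-dimensional scheme $H$), and confirming that the vertex lies off $H$ so that $dQ\neq 0$ throughout. The irreducibility of $Q$ is also needed to get the dimension count.
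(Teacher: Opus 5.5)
Your proposal is correct and follows essentially the same route as the paper. Both arguments use the Jacobian criterion with $dQ\neq 0$ on $H$ (since $Q(v)=1$), which forces $x_i^2=\lambda x_i$, and both then use Cohen--Macaulayness, a finite singular locus and connectedness to get normality and integrality. The only variation is the regular-sequence step: you show $Q$ is irreducible and $Q\nmid C$, whereas the paper just notes that $Q$ is monic in $t$ and hence a nonzerodivisor modulo $C$, so irreducibility of $Q$ is not actually needed for the dimension count.
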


\begin{proof}
The polynomial $C$ is nonzero, and $Q=Q_6+t^2$ is a
nonzerodivisor modulo $C$ because it is monic in $t$. Thus
$(C,Q)$ is a regular sequence in $S$, and $H$ is a complete
intersection of dimension four.

The gradients are
\begin{align*}
 \nabla Q&=(x_1,x_0,x_3,x_2,x_5,x_4,0),\\
 \nabla C&=(x_1^2,x_0^2,x_3^2,x_2^2,x_5^2,x_4^2,0).
\end{align*}
Thus $T$ is smooth outside its vertex. Since $Q=1$ at the vertex,
both gradients are nonzero on $H$.

A singular point of $H$ is characterised uniquely by the fact that the two nonzero gradients are dependent.
Thus $x_i^2=\lambda x_i$ for every $i$, with $\lambda\ne0$.
Rescaling the coordinates by $\lambda^{-1}$ makes
$(x_0,\ldots,x_5)$ a nonzero binary vector $e \in \bF^6_2$. Conversely, $C(e)=0$
for every such vector, and the gradients are dependent at
$[e:s_e]$. The equation $Q=0$ determines $s_e$ uniquely, and
\eqref{eq:binary} gives $s_e\ne0$. This proves the assertion about
the singular points.

As a positive-dimensional complete intersection in projective space, $H$ is connected and Cohen--Macaulay.
 It is regular
in codimension one, since its singular locus is finite; hence it is
normal by Serre's criterion. The irreducible components of a normal
scheme are disjoint, so connectedness implies that $H$ is integral.
\end{proof}

\begin{proposition}\label{prop:foliation}
Let
\[
 U:=T\setminus\left(\bigl\{[0:\cdots:0:1]\}
               \cup\{[e:0]\mid 0\ne e\in\bF_2^6\bigr\}\right).
\]
Then $U$ is smooth and contains $H$. Moreover, it carries a regular rank-one foliation $\cF \cong \cO_U(-1)$ induced by $D$, which restricts to a foliation on $H$.
\end{proposition}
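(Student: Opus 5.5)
The plan is to check everything directly from the formula \eqref{eq:derivation}, using only two derivative identities together with the Euler sequence on $\bP^6$. Since the characteristic is two, $D(x_i^2)=2x_i^3=0$ and $D(t)=0$, so $D$ kills every square. Hence
\[
 D(x_{2j}^2x_{2j+1}+x_{2j}x_{2j+1}^2)=x_{2j}^2x_{2j+1}^2+x_{2j}^2x_{2j+1}^2=0,
 \qquad
 D(x_{2j}x_{2j+1})=x_{2j}^2x_{2j+1}+x_{2j}x_{2j+1}^2 .
\]
It follows that $D(C)=0$ and $D(Q)=D(Q_6)=C$. Thus $D$ preserves both $(C)$ and $(Q,C)$. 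Moreover $D^2(x_i)=D(x_i^2)=0$ and $D^2(t)=0$, so $D^2=0$ as a derivation of $S$.

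\textbf{Step 1: $U$ is smooth and contains $H$.} Smoothness of $U$ follows from Proposition~\ref{prop:complete-intersection}, since $U$ omits the vertex. For $H\subset U$: at the vertex $Q=1$, and at $[e:0]$ we have $Q=Q_6(e)\neq0$ by \eqref{eq:binary}. So $H$ meets neither removed locus.

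\textbf{Step 2: $D$ gives a line subbundle of $T_U$.} The derivation $D=\sum f_i\partial_i$ has coefficients $f_i$ homogeneous of degree $2$. Via the Euler sequence $0\to\cO\to\cO(1)^{7}\to T_{\bP^6}\to0$, the vector $(x_0^2,\dots,x_5^2,0)$ defines a map $\cO_{\bP^6}(-1)\to T_{\bP^6}$. Concretely, on the chart where a coordinate $y\in\{x_0,\dots,x_5,t\}$ is invertible, the local generator is $\delta_y:=y^{-1}D$, acting on degree-zero functions.
\begin{itemize}
\item Because $D(C)=0$, this vector field is tangent to $T$, so on the smooth locus it factors through $T_U\subset T_{\bP^6}|_U$.
\item The map vanishes at a point exactly when $(x_0^2,\dots,x_5^2,0)$ is proportional to $(x_0,\dots,x_5,t)$, i.e.\ $x_i^2=\lambda x_i$ for all $i\le5$ and $\lambda t=0$.
\item If $\lambda=0$, all $x_i=0$ and we are at the vertex. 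If $\lambda\neq0$, then $t=0$ and, after rescaling, $x\in\bF_2^6\setminus\{0\}$, giving the points $[e:0]$.
\end{itemize}
So on $U$ the map $\cO_U(-1)\to T_U$ is nowhere vanishing on fibres. A nowhere-vanishing map from a line bundle into a vector bundle is a subbundle with locally free quotient. Hence $\cF:=\cO_U(-1)\subset T_U$ is regular.

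\textbf{Step 3: closure under squaring and restriction to $H$.} Bracket closure is automatic in rank one, as noted after Definition~\ref{def:foliation}. For squaring it suffices to check the local generators $\delta_y$:
\[
 \delta_y^2=y^{-1}D(y^{-1})D+y^{-2}D^2=y^{-1}D(y^{-1})D .
\]
If $y=x_j$ then $D(x_j^{-1})=-x_j^{-2}x_j^2=1$, so $\delta_y^2=\delta_y$. If $y=t$ then $D(t^{-1})=0$, so $\delta_y^2=0$. In both cases $\delta_y^2\in\cF$.

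Finally, $D(Q)=C\in(Q,C)$ and $D(C)=0$, so the ideal sheaf of $H$ is preserved by every local section of $\cF$. Therefore the foliation restricts to $H$, and Lemma~\ref{lem:invariant-subschemes} applies.

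\textbf{Expected main obstacle.} The delicate part is the bookkeeping in Step 2. One must correctly translate the homogeneous degree-one derivation into a twisted vector field, modulo the Euler field, and show that its zero locus on $T$ is exactly the vertex together with the $63$ points $[e:0]$. The derivative identities themselves are routine.
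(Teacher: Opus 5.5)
Your proof is correct and follows essentially the same route as the paper. Both rest on the identities $D(Q)=C$, $D(C)=0$, $D^2=0$, on the local generators $y^{-1}D$ satisfying $(y^{-1}D)^2\in\cF$ because $D(x_j^{-1})=1$, and on showing that the zero locus is exactly the vertex together with the points $[e:0]$.

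The only difference is packaging. You find the zero locus globally through the Euler sequence, via the condition $x_i^2=\lambda x_i$, $\lambda t=0$. The paper instead computes $\delta_i(r_j)=r_j^2+r_j$ and $\delta_i(u)=u$ in the affine charts $U_i$, and reads off $\cF\cong\cO_U(-1)$ from the transition functions $\delta_j=(x_i/x_j)\delta_i$. Your extra chart $t\neq0$ is harmless but not needed: the vertex is removed, so the charts $x_i\neq0$ already cover $U$.
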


\begin{proof}
The preceding proposition shows that $U$ is smooth. The vertex does
not lie on $V(Q)$, and \eqref{eq:binary} excludes each $[e:0]$.
Thus $H\subset U$.

\vspace{0.7em}
\noindent\textbf{Descent to $T$ and $H$.}
Direct differentiation gives
\begin{equation}\label{eq:derivation-identities}
 D(Q)=C,\qquad D(C)=0,\qquad D^2=0.
\end{equation}
Thus $D$ induces a degree-one
derivation on  $S/(C)$ and $S/(Q,C)$.
\DIFaddbegin

\vspace{0.7em}
\noindent\textbf{Identification with $\cO_U(-1)$.}
 On the chart $T\,\cap\,\{x_i\ne0\}$, $D$ extends
to homogeneous localizations by the quotient rule, and
\[
 \delta_i:=x_i^{-1}D\colon\,
 (S/(C))[x_i^{-1}]_0\longrightarrow (S/(C))[x_i^{-1}]_0
\]
is a derivation of its coordinate ring. Restrict it to
$U_i :=U\cap\{x_i\ne0\}$. These opens cover $U$.

On $U_i\cap U_j$,
\[
 \delta_j=(x_i/x_j)\delta_i.
\]
These are the transition functions of $\cO_U(-1)$, so the vector
fields give an isomorphism $\cF\cong\cO_U(-1)$.

\vspace{0.7em}
\noindent\textbf{Nonvanishing and regularity.}
For the affine coordinate functions
$r_j :=x_j/x_i$ with $j\ne i$, and $u=t/x_i$, one has
\begin{equation}\label{eq:local-action}
 \delta_i(r_j)=r_j^2+r_j\qquad \delta_i(u)=u.
\end{equation}
The coordinate functions $r_j,u$ generate the coordinate ring of
the affine chart of $T$. Thus $\delta_i$ vanishes at a point
precisely when all the values in \eqref{eq:local-action} vanish.
This forces $r_j\in\bF_2$ and $u=0$, thus giving one of the removed
points $[e:0]$.
The local generators of $\cF$ are therefore nowhere
vanishing, so $\cF\subset T_U$ is  a line subbundle with locally free quotient.

\vspace{0.7em}
\noindent\textbf{Closure under squaring.}
Equation~\eqref{eq:local-action} gives $\delta_i^2=\delta_i$ on
the coordinate functions, hence on their ring and its localizations.
The criterion following Definition~\ref{def:foliation} completes
the proof.
\end{proof}

We now take the quotient by the foliation studied in Proposition \ref{prop:foliation}.

\DIFaddend \begin{proposition}\label{prop:quotient}
Let $\varphi \colon U\to Z :=U/\cF$  be the quotient  by the foliation of
Proposition~\ref{prop:foliation}. There is a Cartier divisor
$X\subset Z$ with $\varphi^{-1}(X)=H$. The fourfold $X$ is projective  and
integral, $Z$ is smooth, and the restriction $\pi\colon H\to X$ is a
finite flat universal homeomorphism of degree two.
Moreover, $\cO_X$ is the sheaf of invariant functions on $H$.
\end{proposition}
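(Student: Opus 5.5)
The plan is to apply Theorem~\ref{thm:ekedahl} and Lemma~\ref{lem:invariant-subschemes} to the ideal sheaf $\cI\subset\cO_U$ of $H$ in $U$. By Proposition~\ref{prop:foliation}, $\cF$ is a regular rank-one foliation on the smooth variety $U$. Theorem~\ref{thm:ekedahl} then says that $Z=U/\cF$ is a smooth $k$-variety and that $\varphi$ is a finite flat universal homeomorphism of degree two.

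\textbf{Invariance of $\cI$.} On $U$, the subscheme $H$ is cut out by the single equation $Q$, since $U\subset T=V(C)$. Hence $\cI$ is locally generated by $Q/x_i^2$ on $U_i$. I will check that $\delta_i=x_i^{-1}D$ preserves this ideal. Using \eqref{eq:derivation-identities}, we have $D(Q)=C=0$ in $S/(C)$. The quotient rule then gives
\[
\delta_i(Q/x_i^2)=x_i^{-1}\bigl(D(Q)x_i^{-2}-2Qx_i^{-3}D(x_i)\bigr)=0
\]
in characteristic two. So each local generator $\delta_i$ kills the local generator of $\cI$, and by the Leibniz rule it preserves $\cI$. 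The local sections of $\cF$ are $\cO_U$-multiples of $\delta_i$, so they preserve $\cI$ as well.

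\textbf{Applying the lemma.} Lemma~\ref{lem:invariant-subschemes} now produces an ideal $\cJ\subset\cO_Z$ with $\cI=\cJ\cO_U$. Set $X:=V(\cJ)$. The lemma gives $H=U\times_Z X=\varphi^{-1}(X)$, shows that $\pi\colon H\to X$ is a finite flat universal homeomorphism of degree two, and identifies $\cO_X$ with the sheaf of invariant functions on $H$.

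\textbf{$X$ is Cartier.} Locally we have $A=B\oplus Bv$ and $I=JA$ with $A$ free over $B$. Since $I$ is locally principal and $A$ is faithfully flat over $B$, the $B$-module $J$ is locally free of rank one. Concretely, $J\otimes_B A\cong I$ is invertible, and faithfully flat descent of local freeness applies because $B\to A$ is finite free. Alternatively, $Z$ is smooth, hence locally factorial, and $X$ has pure codimension one because $\varphi$ is a homeomorphism and $H$ has codimension one in $U$. I will also record that $\cJ$ contains $Q^2/x_i^4$, which is invariant.

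\textbf{$X$ is integral.} Since $\pi$ is a homeomorphism, $X$ is irreducible, and $H$ is integral by Proposition~\ref{prop:complete-intersection}. The structure sheaf $\cO_X$ injects into $\pi_*\cO_H$ as the invariants, which form a subring of a domain. So $X$ is reduced and irreducible, hence integral.

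\textbf{$X$ is projective — the main obstacle.} The difficulty is that $U$, and hence $Z$, is not proper. I would use that $\pi$ is finite and surjective and that $H$ is proper. Properness of $X$ follows because $X$ is separated of finite type, as a closed subscheme of the variety $Z$, and is the image of a proper scheme under a surjective morphism.

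For projectivity, the first thing I would try is the line bundle $\cO_H(2)$. The functions $x_i^2$ are $D$-related: $D(x_i^2)=0$, so squares of linear forms are invariant. Better still, the Frobenius factorisation $F=\psi\circ\varphi$ of Theorem~\ref{thm:ekedahl} shows that $\psi^*\cO_U(1)$ restricted to $X$ pulls back along $\pi$ to $\cO_H(2)$, which is ample. Since $\pi$ is finite and surjective, ampleness descends, so $X$ is projective.
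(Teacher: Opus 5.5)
Your proposal is correct and takes essentially the paper's route. Both arguments use Ekedahl's theorem, then apply Lemma~\ref{lem:invariant-subschemes} to the $D$-stable ideal of $H$, then get the Cartier property by faithfully flat descent of the invertible ideal $\cI=\cJ\cO_U$ (the paper just cites the Stacks Project for this step).

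You go further than the paper on integrality and projectivity, which it calls clear by construction. Your projectivity argument, via $\pi^*\bigl(\psi^*\cO_U(1)|_X\bigr)\simeq\cO_H(2)$ and descent of ampleness along the finite surjection $\pi$, is sound. Your side remark that local factoriality plus pure codimension one gives the Cartier property would also need $X$ to have no embedded components, so keep the descent argument as the actual proof.
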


\begin{proof}
Theorem~\ref{thm:ekedahl} provides us with the smooth quotient $Z$. By the invariance established in Proposition~\ref{prop:foliation}, Lemma~\ref{lem:invariant-subschemes}
gives $X$ and the Cartesian square
\[
\begin{tikzcd}[column sep=large,row sep=large]
 H \ar[r,hook] \ar[d,"\pi"'] & U \ar[d,"{\varphi}"] \\
 X \ar[r,hook] & Z,
\end{tikzcd}
\]
together with the assertions about $\pi$ and invariant functions. Since $\varphi$ is faithfully flat and its inverse image
$H$ is an effective Cartier divisor in $U$, the closed subscheme $X$ is an effective Cartier divisor in $Z$; see 
\stacksproj{05B2}. The remaining assertions are clear by construction. \qedhere

\end{proof}

By Ekedahl's theorem~\ref{thm:ekedahl} and Proposition \ref{prop:foliation}, $X$ is smooth away from the images of the
$63$ singular points of $H$. To prove smoothness at the image of each singular point
$P_e=[e:s_e]$, we consider
a line $\Gamma_e\subseteq T=V(C)$ through $P_e$ parametrised  by $t \mapsto [e:t]$. Working on an affine chart,
we show that the local equation defined by $Q$ descends to a function
whose restriction to $\varphi(\Gamma_e)$ is unramified at $\varphi(P_e)$. By the Jacobian criterion, this shows that $X$ is smooth at $\phi(P_e)$.
\begin{proposition}\label{prop:smooth}
The fourfold $X$ is smooth.
\end{proposition}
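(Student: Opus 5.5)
Away from the $63$ points $\varphi(P_e)$, smoothness is formal: $\pi$ is the base change of $\varphi$ along $X\subset Z$, and near a smooth point of $H$ the foliation restricted to $H$ is regular (rank one, nonvanishing, $p$-closed). So, by Ekedahl's theorem applied to $H^{\rm sm}$ together with Lemma~\ref{lem:invariant-subschemes}, $X$ is smooth there. Equivalently, $X$ is a Cartier divisor in the smooth fivefold $Z$, and fpqc descent of regularity along the flat map $\pi$ works on the smooth locus. It therefore remains to treat a fixed $P_e=[e:s_e]$. Choose $i$ with $e_i=1$ and work on the chart $U_i$ with coordinates $r_j=x_j/x_i$ and $u=t/x_i$. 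By Remark~\ref{rem:local-quotient}, near $P_e$ we have $A=B\oplus Bv$ with $\delta_i(v)=1$. Since $X$ is a Cartier divisor in the smooth $Z$, the plan is to show that its local equation $g\in B$ satisfies $dg\neq 0$ at $\varphi(P_e)$, i.e.\ $g\notin\mathfrak m^2$.

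To find $g$, let $f:=Q/x_i^2$ be the local equation of $H$ in $U_i$. Using $D(Q)=C$ and $D(x_i^2)=0$, we get $\delta_i(f)=x_i^{-1}C/x_i^2=0$ on $T$. So $f$ is itself invariant, i.e.\ $f\in B$, and by the proof of Lemma~\ref{lem:invariant-subschemes} the ideal $J=(f)A\cap B$ is generated by $g=f$. Thus $g$ is the function $f=Q_6(r)+u^2$ regarded on $Z$. The task is to show that $f$ is not in $\mathfrak m_{Z,\varphi(P_e)}^2$, even though its pullback lies in $\mathfrak m_{U,P_e}^2$, because $H$ is singular there.

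To detect this, restrict to the line $\Gamma_e=\{[e:t]\}\subset T$; it lies in $T$ because $C(e)=0$. On $\Gamma_e$ the coordinates are $r_j=e_j$ (constant) and $u$ varies, and $\delta_i(u)=u$ is tangent to $\Gamma_e$, so $\Gamma_e$ is invariant. By Lemma~\ref{lem:invariant-subschemes}, $\Gamma_e\to\varphi(\Gamma_e)$ is the quotient of $\Gamma_e$ by $u\partial_u$. The invariants of $k[u,u^{-1}]$ near $u=s_e\neq 0$ are $k[u^2]$ (locally $w:=u^2$), so $\varphi(\Gamma_e)$ is smooth with local coordinate $w$ near $w_0=s_e^2$. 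The restriction of $f$ is $f|_{\varphi(\Gamma_e)}=Q_6(e)+w$, which has simple zero at $w=Q_6(e)=s_e^2$. Hence $f$ has nonzero differential along the smooth curve $\varphi(\Gamma_e)$ at $\varphi(P_e)$, so $f\notin\mathfrak m^2$, and by the Jacobian criterion $X=V(f)$ is smooth at $\varphi(P_e)$.

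The main obstacle I expect is making the restriction step rigorous: one has to check that $\varphi(\Gamma_e)$, defined via the ideal $J$, is a reduced smooth curve through $\varphi(P_e)$ with coordinate $w=u^2$, and that restricting $B$-functions to it agrees with taking invariants on $\Gamma_e$. I would handle this by applying Lemma~\ref{lem:invariant-subschemes} to the $\delta_i$-stable ideal $(r_j-e_j)_{j\ne i}$; it is stable since $\delta_i(r_j-e_j)=r_j^2+r_j=(r_j-e_j)(r_j+e_j+1)$. The lemma identifies $\cO_{\varphi(\Gamma_e)}$ with the invariants $k[u^2]$ localized, and the rest is the one-variable computation above.
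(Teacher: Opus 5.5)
Your proposal is correct and follows the paper's proof essentially step by step. You reduce to the $63$ points $\pi(P_e)$, restrict the local equation of $X$ to the invariant line $\Gamma_e$, whose quotient $\overline\Gamma_e$ has coordinate $w=u^2$, and observe that the equation restricts to $w-s_e^2$, which has a simple zero. Your explicit check that $\delta_i(Q/x_i^2)=C/x_i^3=0$ on $T$, so that one may take $g=f$ itself, is a nice justification of the paper's choice $\varphi^*g_i=f_i$.

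One harmless slip: you cannot normalise $\delta_i(v)=1$. Since $\delta_i^2=\delta_i$, that would give $1=\delta_i^2(v)=\delta_i(1)=0$; only $\delta_i=c\partial$ with $c$ a unit holds. You never use $v$, so the argument is unaffected.
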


\begin{proof}
As remarked above, it suffices to
prove smoothness at $y:=\pi(P)$, where $P=P_e=[e:s_e]$ are the singular points of $H$ determined in Proposition \ref{prop:complete-intersection}.
Fix such a point $P$, and choose $i$ with $e_i=1$.

Set $V_i:=\varphi(U_i)$. We shall use the
coordinates $r_j,u$ of \eqref{eq:local-action}. In particular, 
$r_j(P)=e_j$ and $u(P)=s_e\ne0$. Moreover, 
let $f_i:=Q/x_i^2$ and $g_i$ be local equations of $H$ on $U_i$
and $X$ on $V_i$, respectively, chosen so that
\begin{equation}\label{eq:local-equation}
 \varphi^*g_i=f_i.
\end{equation}
Such equations exist by Proposition \ref{prop:quotient}.
Since $Z$ is smooth and $X$ has equation $g_i$ on $V_i$, it  suffices to show that $dg_i(y)\ne0$.

Since  the equation $C$ does not contain the variable $t$ and $C(e)=0$, the affine line
parametrised by $t\mapsto[e:t]$ lies in $T$. Its intersection
with $U_i$ is the closed curve through $P$:
\[
 \Gamma_e:=\{r_j=e_j\text{ for }j\ne i\}
          \simeq\Spec k[u,u^{-1}]\subset U_i.
\]
The point $u=0$ was removed in defining $U$.
The ideal of $\Gamma_e$ is invariant because
\[
 \delta_i(r_j-e_j)=(r_j-e_j)^2+(r_j-e_j).
\]
On $\Gamma_e$, the derivation is $u\partial_u$, whose invariant
ring is $k[u^2,u^{-2}]$. Lemma~\ref{lem:invariant-subschemes}
therefore gives a closed curve through $y$,
\[
 \overline\Gamma_e=\Spec k[b,b^{-1}]\subset V_i,
\]
with pullback $b\mapsto u^2$. Since $\varphi^*g_i=f_i$ and
$Q_6(e)=s_e^2$, we obtain
\[
 f_i|_{\Gamma_e}=(u-s_e)^2,
 \qquad g_i|_{\overline\Gamma_e}=b-s_e^2.
\]
The second equation has a simple zero at $y$, so $dg_i(y)\ne0$.
\end{proof}

\begin{proposition}\label{prop:fano}
The fourfold $X$ is Fano, and
\begin{equation}\label{eq:canonical-example}
 \pi^*\omega_X\simeq\cO_H(-1).
\end{equation}
\end{proposition}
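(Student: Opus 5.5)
Compute $\omega_X$ by adjunction on $Z$, pull it back along $\varphi$, and use \eqref{eq:canonical-pullback}; ampleness then follows because $\pi$ is finite and surjective.

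\textbf{Step 1: canonical bundle of $U$ and $H$.} Since $U$ is an open subset of the hypersurface $T=V(C)$ of degree $3$ in $\bP^6$, adjunction gives $\omega_U\simeq\cO_U(-7+3)=\cO_U(-4)$.

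\textbf{Step 2: canonical bundle of $Z$.} By Proposition~\ref{prop:foliation}, $\cF\cong\cO_U(-1)$. Then \eqref{eq:canonical-pullback} gives
\[
 \varphi^*\omega_Z\simeq\omega_U\otimes\cF^{-1}\simeq\cO_U(-3).
\]

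\textbf{Step 3: adjunction for $X\subset Z$.} $X$ is an effective Cartier divisor in the smooth variety $Z$ (Propositions~\ref{prop:quotient} and \ref{prop:smooth}). Hence $\omega_X\simeq(\omega_Z\otimes\cO_Z(X))|_X$. Since $\varphi^{-1}(X)=H$ scheme-theoretically, we have $\varphi^*\cO_Z(X)\simeq\cO_U(H)$. On $U$ the divisor $H$ is cut out by $Q$, so $\cO_U(H)\simeq\cO_U(2)$. Restricting via the Cartesian square of Proposition~\ref{prop:quotient} gives
\[
 \pi^*\omega_X\simeq(\varphi^*\omega_Z\otimes\varphi^*\cO_Z(X))|_H\simeq\cO_H(-3+2)=\cO_H(-1),
\]
which is \eqref{eq:canonical-example}.

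\textbf{Step 4: ampleness.} $\cO_H(1)$ is ample because $H\subset\bP^6$ is projective. The morphism $\pi$ is finite and surjective, and $X$ is projective by Proposition~\ref{prop:quotient}. A line bundle whose pullback under a finite surjective morphism is ample is itself ample (e.g.\ via Nakai--Moishezon, or the standard criterion, Hartshorne Ex.~III.5.7). Hence $\omega_X^{-1}$ is ample, and $X$ is Fano.

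\textbf{Main obstacle.} The delicate point is Step 3: pulling back the adjunction isomorphism along $\pi$ while respecting the Cartesian square. One must check that the identification $\varphi^*\cO_Z(X)\simeq\cO_U(H)$ matches the local equations $\varphi^*g_i=f_i$ of \eqref{eq:local-equation}. One must also check that $\cO_U(H)\simeq\cO_U(2)$ globally, given that $f_i=Q/x_i^2$ glue with the transition functions of $\cO(2)$. Smoothness of $X$ at the images of the 63 points, proved in Proposition~\ref{prop:smooth}, is what guarantees that $\omega_X$ is invertible there, so the formula holds on all of $X$ and not only away from those points.
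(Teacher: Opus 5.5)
Your proposal is correct and follows essentially the same route as the paper. The paper also computes $\omega_U\simeq\cO_U(-4)$, then $\varphi^*\omega_Z\simeq\cO_U(-3)$ via Ekedahl's formula with $\cF\simeq\cO_U(-1)$, then applies adjunction to the smooth Cartier divisor $X\subset Z$ using $\varphi^*X=H$ and $\cO_U(H)\simeq\cO_U(2)$, and finally deduces ampleness of $\omega_X^{-1}$ from ampleness of its pullback under the finite surjective map $\pi$.
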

\begin{proof}
Adjunction for the cubic hypersurface gives
$\omega_U\simeq\cO_U(-4)$. Since $\cF\simeq\cO_U(-1)$,
the canonical-bundle formula \eqref{eq:canonical-pullback} gives
\[
\varphi^*\omega_Z\simeq\omega_U\otimes\cF^{-1}
                  \simeq\cO_U(-3).
\]
By Proposition~\ref{prop:quotient},  $\varphi^*X=H$ as Cartier divisors,
and $\cO_U(H)\simeq\cO_U(2)$ because $H$ is cut out by $Q$.
Adjunction for the smooth divisor $X\subset Z$ now yields
\[
\pi^*\omega_X
 \simeq\bigl(\varphi^*\omega_Z\otimes\cO_U(H)\bigr)|_H
 \simeq\cO_H(-1).
\]
Thus $\pi^*\omega_X^{-1}\simeq\cO_H(1)$ is ample. Since $\pi$
is finite and surjective and both varieties are proper,
$\omega_X^{-1}$ is ample by \stacksproj{0B5V}. Thus $X$ is Fano.
\end{proof}

\section{The cohomology calculation}\label{sec:cohomology}

We retain the construction and notation $\pi \colon H\to X$ from Proposition~\ref{prop:quotient}.
Recall from Construction~\ref{construction:example} that
$S=k[x_0,\ldots,x_5,t]$, $R=S/(Q,C)$, and $S_n,R_n$ denote the degree-$n$ parts of $S$ and $R$, respectively. In particular, $S_1$ is the space of linear
forms, and $R_2=S_2/kQ$, where $kQ=\{aQ\mid a\in k\}$.

\begin{lemma}\label{lem:koszul}
Restriction induces
\[
 H^0(H,\cO_H)=k\qquad
 H^0(H,\cO_H(1))=R_1=S_1\qquad
 H^0(H,\cO_H(2))=R_2.
\]
Moreover, $H^j(H,\cO_H)=0$ for every $j>0$.
\end{lemma}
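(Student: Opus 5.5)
We will use the Koszul resolution of $\cO_H$ on $\bP^6$. By Proposition~\ref{prop:complete-intersection}, $(C,Q)$ is a regular sequence in $S$ of degrees $3$ and $2$. Sheafifying the Koszul complex therefore gives an exact sequence on $\bP:=\bP^6$:
\[
 0\to\cO_{\bP}(n-5)\to\cO_{\bP}(n-3)\oplus\cO_{\bP}(n-2)\to\cO_{\bP}(n)\to\cO_H(n)\to0 .
\]
It is equally convenient to split this into two short exact sequences:
\[
0\to\cO_{\bP}(n-3)\xrightarrow{C}\cO_{\bP}(n)\to\cO_T(n)\to0,
\qquad
0\to\cO_T(n-2)\xrightarrow{Q}\cO_T(n)\to\cO_H(n)\to0 .
\]
The second sequence is exact because $Q$ is a nonzerodivisor modulo $C$. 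The only input beyond this is Bott's formula for $\bP^6$: $H^j(\bP,\cO(m))=0$ for $0<j<6$ and all $m$; $H^6(\bP,\cO(m))=0$ for $m>-7$; and $H^0(\bP,\cO(m))=S_m$.

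First we treat the hypersurface $T$. From the first sequence, for $n\ge -3$ we get $H^0(T,\cO_T(n))=S_n/CS_{n-3}=(S/C)_n$, which uses $H^1(\bP,\cO(n-3))=0$. We also get $H^j(T,\cO_T(n))=0$ for $0<j<5$ and all $n$. Finally, $H^5(T,\cO_T(n))$ embeds into $H^6(\bP,\cO(n-3))$, which vanishes as soon as $n-3>-7$, that is, for $n\ge -3$.

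Next we pass to $H$ via the second sequence. For $n\in\{0,1,2\}$ we have $n-2\ge-2\ge -3$, so $H^1(T,\cO_T(n-2))=0$. Taking global sections then gives
\[
H^0(H,\cO_H(n))=(S/C)_n/Q\,(S/C)_{n-2}=R_n .
\]
Concretely, this is $k$ for $n=0$, $S_1$ for $n=1$ (as $C$ has degree $3$ and $Q$ has degree $2$), and $S_2/kQ$ for $n=2$. These identifications are all induced by restriction, as claimed.

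For the higher cohomology, take $n=0$. For $j\ge1$ the long exact sequence traps $H^j(H,\cO_H)$ between $H^j(T,\cO_T)$ and $H^{j+1}(T,\cO_T(-2))$.
\begin{itemize}
\item For $1\le j\le 3$ both outer terms vanish, since both indices lie strictly between $0$ and $5$.
\item For $j=4$ the two terms are $H^4(T,\cO_T)=0$ and $H^5(T,\cO_T(-2))=0$, the latter because $-2\ge-3$.
\item For $j>4$ the group vanishes since $\dim H=4$.
\end{itemize}

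There is no real obstacle here; the only points needing care are the exactness of the Koszul sequences, which is supplied by Proposition~\ref{prop:complete-intersection}, and the bookkeeping of the degree thresholds in Bott vanishing. Note that the singularities of $H$ play no role.
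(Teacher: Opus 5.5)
Your proof is correct and is essentially the paper's argument. The paper applies Bott vanishing directly to the three-term twisted Koszul resolution for $n=0,1,2$ (all twists are $\ge -5>-7$), whereas you unpack the same resolution into the two short exact sequences through $T=V(C)$; your thresholds like $n\ge -3$ are only needed for $H^5(T,\cO_T(n))$, since the $H^0$ and $H^1$ statements on $T$ hold for every twist.
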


\begin{proof}
Consider the twisted Koszul resolution 
\begin{equation}\label{eq:koszul}
\begin{split}
 0\to\cO_{\bP^6}(n-5)
 &\to\cO_{\bP^6}(n-2)\oplus\cO_{\bP^6}(n-3)\\
 &\to\cO_{\bP^6}(n)\to\cO_H(n)\to0.
\end{split}
\end{equation}
For $n=0,1,2$, all projective-space terms in the Koszul resolution
\eqref{eq:koszul} have vanishing higher cohomology. At $n=0$,
the negative twists have no global sections either, giving
$H^0(H,\cO_H)=k$ and the asserted vanishing. At $n=1,2$, taking
global sections gives $S_1$ and $S_2/kQ$, respectively.
\end{proof}

The next lemma supplies the short exact sequence used to compute
$H^1(X,\cO_X)$. Its maps come directly from the polynomial derivation.

\begin{lemma} \label{lem:derivation-sequence}
The derivation $D$ induces an  exact complex of $\cO_X$-modules
\begin{equation} \label{eq:DComplex}
 0\to\cO_X\to\pi_*\cO_H\xrightarrow{D}\pi_*\cO_H(1)
                   \xrightarrow{D}\pi_*\cO_H(2).
\end{equation}
Equivalently, for
\[
K :=\ker\bigl(D\colon \pi_*\cO_H(1)\to\pi_*\cO_H(2)\bigr),
\]
there is a short exact sequence
\begin{equation}\label{eq:derivation-sequence}
 0\to\cO_X\to\pi_*\cO_H\xrightarrow{D}K\to0.
\end{equation}
\end{lemma}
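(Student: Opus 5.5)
The exactness is local on $X$, so I will work on the charts $V_i=\varphi(U_i)$ and use the local generators $\delta_i=x_i^{-1}D$ from Proposition~\ref{prop:foliation}.

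\emph{Well-definedness.} First I check that $D$ gives $\cO_X$-linear maps $\pi_*\cO_H(n)\to\pi_*\cO_H(n+1)$. Since $D$ raises degree by one and preserves the ideal $(Q,C)$ by \eqref{eq:derivation-identities}, it induces homogeneous-degree-raising maps on $R$. These localize to maps $\cO_H(n)\to\cO_H(n+1)$ by the quotient rule. On $U_i$, trivialize $\cO_H(n)$ by $x_i^n$. For $h\in\cO_H(U_i)$ we get $D(hx_i^n)=\delta_i(h)x_i^{n+1}+nhx_i^{n+1}$, because $D(x_i)=x_i^2$. So, up to the twist, the map is $\delta_i+n$, which is $\cO_X$-linear since $\cO_X$ consists of the functions killed by $\delta_i$ (Proposition~\ref{prop:quotient}). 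The composite of the two maps is $D^2=0$, again by \eqref{eq:derivation-identities}. Hence \eqref{eq:DComplex} is a complex.

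\emph{Reduction to the local model.} By Lemma~\ref{lem:invariant-subschemes}, locally on $X$ we have $\cO_H=B\oplus Bv$, with $B=\cO_X$ and $\delta=\partial$ the generator satisfying $\partial(a+cv)=c$. The hard part is that our generator $\delta_i$ need not equal $\partial$. It only equals $w\partial$ for some unit $w$ of $A$, since both generate the same foliation. I will therefore work directly with $\delta_i$ and prove two facts: $\ker\delta_i=B$, and $\ker(\delta_i+1)=\operatorname{im}(\delta_i)$ on $A$. With the twists above, these are exactly exactness at $\pi_*\cO_H$ and at $\pi_*\cO_H(1)$. Injectivity of $\cO_X\to\pi_*\cO_H$ is clear.

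\emph{Exactness.} The first fact holds because invariants under $w\partial$ equal invariants under $\partial$, as $w$ is a unit. For the second, the relation $\delta_i^2=\delta_i$ from Proposition~\ref{prop:foliation} gives $\delta_i(\delta_i+1)=\delta_i^2+\delta_i=0$ in characteristic two. Hence $\operatorname{im}\delta_i\subseteq\ker(\delta_i+1)$, and similarly $\operatorname{im}(\delta_i+1)\subseteq\ker\delta_i=B$. For the reverse inclusion, take $f$ with $\delta_i f=f$. Then $f=\delta_i f\in\operatorname{im}\delta_i$ trivially.

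So exactness at $\pi_*\cO_H(1)$ is immediate from $p$-closedness with $\delta_i^2=\delta_i$. The only real work is checking that these identities glue across charts, which holds because they come from the global $D$. The short exact sequence \eqref{eq:derivation-sequence} then follows formally, taking $K$ to be the kernel, which equals the image of the first $D$.
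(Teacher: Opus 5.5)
Your proposal is correct and essentially the paper's proof. Trivializing $\cO_H(1)$ by $x_i$ turns $D$ into $\delta_i+1$, so your step ``$\delta_i f=f$ implies $f=\delta_i(f)$'' is exactly the paper's identity $D(s/x_i)=s$ for $s=fx_i$ with $D(s)=0$. Exactness at $\cO_X$ and $\pi_*\cO_H$ comes, as in the paper, from $\cO_X$ being the invariant functions (Proposition~\ref{prop:quotient}); your remark about the unit $w$ is a harmless extra justification.
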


\begin{proof}We split the proof into two parts.\\[-0.5em]

\noindent\textbf{Construction of Complex (\ref{eq:DComplex})}\ 
By \eqref{eq:derivation-identities}, $D$ induces a degree-one
derivation on $R=S/(Q,C)$ with $D^2=0$. Extending it to homogeneous
localizations by the quotient rule and taking degree-$n$ parts gives
compatible maps $\cO_H(n)\to\cO_H(n+1)$. These maps satisfy the
Leibniz rule for products of sections.

On $H\cap U_i$, differentiation of a regular function $f$ is
$D(f)=x_i\delta_i(f)$, a section of $\cO_H(1)$. Hence $D$ kills
functions pulled back from $X$, so the displayed maps
are $\cO_X$-linear. Since $D^2=0$, they form  a complex. By Proposition~\ref{prop:quotient}, this complex is exact at $\cO_X$ and $\pi_*\cO_H$.

\vspace{0.7em}
\noindent\textbf{Surjectivity onto $K$ in (\ref{eq:derivation-sequence})}.
The opens $H\cap U_i$ cover $H$ and correspond to an open cover
of $X$, since $\pi$ is a homeomorphism. On one of these opens,
let $s$ be a local section of $\cO_H(1)$ satisfying $D(s)=0$.
Since $x_i$ is a nowhere-vanishing section of $\cO_H(1)$ there,
$s/x_i$ is a regular function. The quotient rule gives
$D(x_i^{-1})=x_i^{-2}D(x_i)=1$ in characteristic two. Consequently,
\[
 D(s/x_i)=D(s)/x_i+s=s.
\]
Every local section of $K$ therefore has a local preimage. This
proves the required surjectivity of sheaves and concludes the proof that  (\ref{eq:derivation-sequence}), equivalently (\ref{eq:DComplex}), are exact.
\end{proof}

\begin{proposition} \label{prop:cohomology}
One has $H^1(X,\cO_X)\simeq k$.
\end{proposition}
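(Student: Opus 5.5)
Take cohomology of the short exact sequence \eqref{eq:derivation-sequence} of Lemma~\ref{lem:derivation-sequence}. Since $\pi$ is finite, $H^j(X,\pi_*\cO_H(n))=H^j(H,\cO_H(n))$. Lemma~\ref{lem:koszul} gives $H^0(H,\cO_H)=k$ and $H^1(H,\cO_H)=0$, so the long exact sequence reads
\[
 0\to H^0(X,\cO_X)\to k\xrightarrow{D} H^0(X,K)\to H^1(X,\cO_X)\to 0 .
\]
The map $k\to H^0(X,K)$ is zero because $D$ kills constants, and $H^0(X,\cO_X)=k$. Therefore
\[
 H^1(X,\cO_X)\simeq H^0(X,K).
\]
The task is thus to show that $H^0(X,K)$ is one-dimensional.

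By left exactness of global sections,
\[
 H^0(X,K)=\ker\bigl(D\colon H^0(H,\cO_H(1))\to H^0(H,\cO_H(2))\bigr).
\]
By Lemma~\ref{lem:koszul} this is the kernel of the map $S_1\to R_2=S_2/kQ$ induced by $D$. A linear form $\ell=\sum a_ix_i+bt$ satisfies
\[
 D(\ell)=\sum_{i=0}^5 a_ix_i^2 ,
\]
so $\ell$ lies in the kernel exactly when $\sum a_ix_i^2=cQ$ for some $c\in k$.

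Comparing coefficients, the mixed terms $x_0x_1, x_2x_3, x_4x_5$ occur in $Q$ but not in $\sum a_ix_i^2$, which forces $c=0$. Then all $a_i=0$, while $b$ is arbitrary. The kernel is therefore $kt$, and $H^1(X,\cO_X)\simeq k$, spanned by the class coming from $t$.

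The main point to check is that the identifications of Lemma~\ref{lem:koszul} are compatible with $D$. Concretely, the sheaf map $\pi_*\cO_H(1)\to\pi_*\cO_H(2)$ of Lemma~\ref{lem:derivation-sequence} must agree on global sections with the polynomial derivation $R_1\to R_2$. This holds because both are defined by extending $D$ to homogeneous localizations, and restriction from $S$ is compatible with that extension.

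One should also confirm that the map $H^0(\pi_*\cO_H)\to H^0(K)$ is the zero map. This is clear since $H^0(H,\cO_H)=k$ consists of constants.

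As a sanity check on the generator, the section $t\in K$ lifts locally to $t/x_i$, using $D(s/x_i)=s$ from the proof of Lemma~\ref{lem:derivation-sequence}. The Čech cocycle $t/x_i-t/x_j$ is then $\cO_X$-valued, since it is $D$-invariant, and it gives an explicit nonzero class in $H^1(X,\cO_X)$.
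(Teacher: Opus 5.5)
Your proposal is correct and follows essentially the same route as the paper: the long exact sequence of \eqref{eq:derivation-sequence}, combined with Lemma~\ref{lem:koszul}, reduces the problem to computing $\ker\bigl(D\colon R_1\to R_2\bigr)$, and comparing coefficients shows this kernel is $k\cdot t$. The only difference is cosmetic. You kill the multiple of $Q$ using the mixed monomials $x_{2j}x_{2j+1}$, whereas the paper uses the coefficient of $t^2$; either works, since neither type of monomial appears in $\sum_i a_ix_i^2$.
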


\begin{proof}
Since $\pi$ is finite, Lemma~\ref{lem:koszul} gives
\[
 H^0(X,\pi_*\cO_H)=k,\qquad H^1(X,\pi_*\cO_H)=0.
\]
The inclusion $\cO_X\to\pi_*\cO_H$ induces an isomorphism on
global sections.
Thus the long exact sequence associated to
\eqref{eq:derivation-sequence}, followed by the definition of $K$,
gives
\[
 H^1(X,\cO_X)\simeq H^0(X,K)
                =\ker\bigl(D:R_1\to R_2\bigr).
\]

We claim that the kernel is $k \cdot t$. For a linear form
$\ell=\sum_{i=0}^5a_ix_i+ct$, we have
$D(\ell)=\sum_{i=0}^5a_ix_i^2$. If its image in $R_2=S_2/kQ$
is zero, then
\[
 \sum_{i=0}^5a_ix_i^2=\lambda Q\qquad\text{in }S_2
\]
for some $\lambda\in k$. The coefficient of $t^2$ gives
$\lambda=0$, and the coefficients of $x_i^2$ then give $a_i=0$
for every $i$. Conversely, $D(t)=0$. Hence the kernel is indeed $k\cdot t$,
which is one-dimensional.
\end{proof}

\begin{remark} \label{rem:FrobeniusZero}
We note that the absolute Frobenius acts as zero on $H^1(X,\cO_X)$. To see this, we first note that the square of a local function on $H$ is invariant under the foliation, so we get a factorisation:  \[F\colon X\xrightarrow{}H\xrightarrow{\pi}X.\] Since $\pi$ is finite, Lemma~\ref{lem:koszul} gives
$H^1(X,\pi_*\cO_H)=0$.
Hence the composite map
\[
 F^* \colon H^1(X,\cO_X)\to H^1(X,\pi_*\cO_H) \to H^1(X,F_*\cO_X)
\]
is zero as required.
\end{remark}

 \begin{proof}[Proof of Theorem~\ref{thm:main}]
 Combine Propositions~\ref{prop:quotient}, \ref{prop:smooth},
\ref{prop:fano}, and~\ref{prop:cohomology}.
\end{proof}

As it will be needed in the next section, we provide more information on the sheaf $K$ from Lemma \ref{lem:derivation-sequence}.
\begin{proposition} \label{prop:K}
With notation as in Lemma \ref{lem:derivation-sequence}, we have that $H^0(X,K) \neq 0$, that $K$ is an ample line bundle, and that the inclusion
$K\subset\pi_*\cO_H(1)$ induces an isomorphism
\begin{equation}\label{eq:kernel-pullback}
 \pi^*K\simeq\cO_H(1).
 \end{equation}
\end{proposition}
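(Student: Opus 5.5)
The plan is to compute $K$ locally, using the same trick as in the surjectivity part of Lemma~\ref{lem:derivation-sequence}. The nonvanishing $H^0(X,K)\neq0$ is immediate: the proof of Proposition~\ref{prop:cohomology} identifies $H^0(X,K)$ with $\ker(D\colon R_1\to R_2)=k\cdot t\neq 0$. The real content is the local freeness of $K$ together with \eqref{eq:kernel-pullback}.

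\textbf{Local triviality.} Work on the opens $H\cap U_i$, whose images $X_i:=\pi(H\cap U_i)$ cover $X$. On $H\cap U_i$ the section $x_i$ of $\cO_H(1)$ is nowhere vanishing, so every local section of $\cO_H(1)$ has the form $s=x_i f$ with $f$ a regular function. Since $D(x_i)=x_i^2$, the Leibniz rule gives
\[
 D(x_i f)=x_i^2 f+x_i D(f)=x_i^2\bigl(f+\delta_i(f)\bigr).
\]
Hence $s\in K$ if and only if $\delta_i(f)=f$. Set $\sigma_i:=x_i\cdot x_i^{-1}\cdot$, i.e.\ note that the function $w_i:=1/x_i$ regarded via $D(x_i^{-1})=1$ gives a canonical element. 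More concretely, I would exhibit a generator $\kappa_i:=D(v_i)$ for a function $v_i$ with $\partial(v_i)=1$ as in Remark~\ref{rem:local-quotient}. In the proof of Lemma~\ref{lem:derivation-sequence}, $D$ maps $\pi_*\cO_H$ onto $K$ with kernel $\cO_X$. Locally we have $\pi_*\cO_H=\cO_X\oplus\cO_X v$ with $D(a+cv)=c\,D(v)$, using that $D$ is $\cO_X$-linear. Therefore $K=\cO_X\cdot D(v)$, and $D(v)$ is torsion-free because $D$ is injective on $\cO_X v$. This gives $K\cong\pi_*\cO_H/\cO_X$, which is locally free of rank one.

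\textbf{The pullback isomorphism.} The adjoint map $\pi^*K\to\cO_H(1)$ sends $1\otimes D(v)\mapsto D(v)$. Both sides are line bundles, so it suffices to show that $D(v)$ is a nowhere vanishing section of $\cO_H(1)$ on $H\cap U_i$. Writing $D(v)=x_i\delta_i(v)$, we need $\delta_i(v)$ to be a unit. Since $\delta_i$ generates $\cF$ and $\partial$ generates $\cF$ as well (Remark~\ref{rem:local-quotient}), we have $\delta_i=\epsilon\,\partial$ with $\epsilon$ a unit. Then $\delta_i(v)=\epsilon\,\partial(v)=\epsilon$, which is a unit. This works on $U$, and it restricts to $H$ by Lemma~\ref{lem:invariant-subschemes}, because $\cO_H=\cO_X\oplus\cO_X v$ by base change. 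This yields \eqref{eq:kernel-pullback}.

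\textbf{Ampleness.} Since $\pi^*K\simeq\cO_H(1)$ is ample and $\pi$ is finite surjective between proper schemes, $K$ is ample by \stacksproj{0B5V}, exactly as in Proposition~\ref{prop:fano}.

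The main obstacle is the careful bookkeeping between the local description in Remark~\ref{rem:local-quotient}, which lives on $Z$ and $U$, and the graded sheaves $\cO_H(n)$. In particular one must check that the splitting $A=B\oplus Bv$ restricts to $H$, and that $\delta_i$ differs from $\partial$ by a unit. Both follow from regularity of $\cF$ together with Lemma~\ref{lem:invariant-subschemes}.
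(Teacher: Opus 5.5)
Your proposal is correct and follows essentially the same route as the paper: you identify $K$ with $\pi_*\cO_H/\cO_X$ via $D$ using the local splitting $A=B\oplus Bv$, show that $D(v)=x_i\delta_i(v)=x_i\epsilon$ is nowhere vanishing because $\delta_i$ and $\partial$ differ by a unit, and deduce ampleness of $K$ from \stacksproj{0B5V}. The only differences are harmless: you obtain $H^0(X,K)\neq0$ directly from the explicit section $t$ (via $\ker(D\colon R_1\to R_2)=k\cdot t$) rather than from the long exact sequence of \eqref{eq:derivation-sequence}, and the abandoned sentence defining $\sigma_i$ and $w_i$ should simply be deleted.
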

\begin{proof}
The fact that $H^0(X,K) \neq 0$ is immediate from (\ref{eq:derivation-sequence}) given that $H^1(X,\pi_*\cO_H)=0$ and $H^1(X,\sO_X)\neq 0$. The remaining statements are verified using a local computation.

By Remark~\ref{rem:local-quotient} and
Lemma~\ref{lem:invariant-subschemes}, locally on $X$ we may write
$\pi^{-1}(\Spec B)=\Spec A$ with $A=B\oplus Bv$ and $v^2\in B$.
The exact sequence (\ref{eq:derivation-sequence}) identifies $\pi_*\cO_H/\cO_X$ with $K$ via
$[a]\mapsto D(a) \in K$. Thus $K$ is a line bundle, locally generated
by the class $[v]$.

Shrinking $X$ so that $\Spec A\subset H\cap U_i$, write
$\delta_i=c\partial$ with $c\in A^\times$ and $\partial(v)=1$.
Here $c$ is a unit because $\delta_i$ and $\partial$ come from
local generators of the foliation on $U$. By taking the adjoint of the natural inclusion $K \hookrightarrow \pi_*\cO_H(1)$, we get the homomorphism
$\pi^*K\to\cO_H(1)$ sending $[v]$ to $D(v)=x_ic$, which is
nowhere vanishing. This proves \eqref{eq:kernel-pullback}.
Since $\pi$ is finite and surjective, ampleness of $\cO_H(1)$
implies ampleness of $K$ by 
\stacksproj{0B5V}.   
\end{proof}

\section{Differential forms and rational curves}\label{sec:rational-curves}

In this section we construct an embedding of the ample line bundle $K^{\otimes 2}$ inside $\Omega^1_{X/k}$. We then use it to construct a global one-form on $X$ and show that $X$ is not separably uniruled.

Let $F\colon X\to X$ denote absolute Frobenius. As it will be used in the proof, we define the
sheaf of locally exact one-forms by
\[
 B\Omega^1_X :=\operatorname{im}\bigl(
 d\colon F_*\cO_X\to F_*\Omega^1_{X/k}\bigr).
\]
Here $d$ is $\cO_X$-linear, so $B\Omega^1_X$ is an $\cO_X$-module. There is a natural short exact sequence
\begin{equation} \label{eq:Bseq}
0 \to \cO_X \to F_*\cO_X \to B\Omega_X^1\to 0.
\end{equation}

\begin{proposition}\label{prop:frobenius-one-forms}
 There exists an injective map
\begin{equation}\label{eq:positive-differential}
 \beta:K^{\otimes2}\longrightarrow\Omega^1_{X/k}
\end{equation}
that is nonzero on every fibre outside $\pi(\operatorname{Sing}H)$.  In particular, $H^0(X,\Omega^1_{X/k}) \neq 0$.

\end{proposition}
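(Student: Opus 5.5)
The plan is to build $\beta$ from the squaring operation. If $a$ is a local function on $H$, then $a^2$ is invariant under the foliation, so $a^2$ is a local function on $X$ (as in Remark~\ref{rem:FrobeniusZero}). So I would consider the map
\[
 q\colon \pi_*\cO_H\longrightarrow \Omega^1_{X/k},\qquad a\longmapsto d(a^2).
\]

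\textbf{Step 1: $q$ descends to a semilinear map on $K$.} In characteristic two, $q(a+a')=q(a)+q(a')+2\,d(aa')=q(a)+q(a')$, so $q$ is additive. For $b\in\cO_X$ we have $q(ba)=d(b^2a^2)=b^2q(a)$, since $d(b^2)=0$. Also $q(b)=d(b^2)=0$ for $b\in\cO_X$. Hence $q$ factors through
\[
 \pi_*\cO_H/\cO_X\simeq K,
\]
using \eqref{eq:derivation-sequence}, and it is Frobenius-semilinear there. By Proposition~\ref{prop:K}, $K$ is a line bundle, locally generated by the class $[v]$ in the notation $A=B\oplus Bv$. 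So I would define $\beta$ locally by
\[
 \beta([v]\otimes[v]) := d(v^2).
\]
This is well defined: if $[v']=c[v]$ with $c\in B^\times$, then $v'=b+cv$ for some $b\in B$. Hence $d(v'^2)=d(b^2)+d(c^2v^2)=c^2\,d(v^2)$, which is exactly the change of generator $[v']^{\otimes 2}=c^2[v]^{\otimes2}$ in $K^{\otimes2}$. So the local definitions glue to an $\cO_X$-linear map $\beta\colon K^{\otimes 2}\to\Omega^1_{X/k}$.

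\textbf{Step 2: nonvanishing on fibres away from $\pi(\operatorname{Sing}H)$.} This is the step I expect to need the most care. Work on $U$ with the presentation of Remark~\ref{rem:local-quotient}, $\cO_U=\cO_Z\oplus\cO_Z v$. By Lemma~\ref{lem:invariant-subschemes}, the image of $v$ gives the corresponding presentation on $H$. So it suffices to show that the restriction of $d(v^2)\in\Omega^1_Z$ to $X$ is nonzero at $y=\pi(P)$ whenever $P$ is a smooth point of $H$.

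In the étale coordinates $(v^2,x_2,\dots)$ on $Z$, the form $d(v^2)$ is nonzero at $y$. Let $g$ be a local equation of $X$ with $\varphi^*g=f$, as in \eqref{eq:local-equation}. Suppose $d(v^2)|_X$ vanished at $y$. Then $d(v^2)(y)=\lambda\,dg(y)$ in $\Omega^1_Z\otimes k(y)$ for some $\lambda\neq0$. Now pull back along $\varphi$. On the one hand $\varphi^*d(v^2)=d(v^2)=0$ on $U$. On the other hand $\varphi^*dg=df$. This gives $\lambda\,df(P)=0$, so $df(P)=0$, contradicting the smoothness of $H$ at $P$.

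\textbf{Step 3: consequences.} Since $X$ is integral and $K^{\otimes2}$ is a line bundle, a map that is nonzero at the generic point is injective. This shows $\beta$ is injective.

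Next, Proposition~\ref{prop:K} gives a nonzero section $s\in H^0(X,K)$. Since $X$ is integral, $s^{\otimes2}$ is a nonzero section of $K^{\otimes 2}$. By injectivity of $\beta$, the form $\beta(s^{\otimes2})$ is a nonzero global one-form, so $H^0(X,\Omega^1_{X/k})\neq0$.
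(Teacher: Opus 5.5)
Your proposal is correct. Step 1 is the paper's construction. The paper packages the semilinear map $[a]\mapsto d(a^2)$ as an $\cO_X$-linear map $K\to B\Omega^1_X\subset F_*\Omega^1_{X/k}$ and then uses adjunction and $F^*K\simeq K^{\otimes2}$; your explicit gluing check $d(v'^2)=c^2\,d(v^2)$ accomplishes the same thing.

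The genuine difference is in the fibrewise nonvanishing. The paper stays on $H\to X$. It writes $\cO_H\simeq\cO_X[z]/(z^2-b)$ with $b=v^2$ and applies the Jacobian criterion to $H$ as a hypersurface in $\mathbb{A}^1_X$. Because $X$ is smooth (Proposition~\ref{prop:smooth}), this shows that $db$ is nonzero at $y$ \emph{if and only if} $H$ is smooth above $y$. You instead pass to the ambient smooth pair $\varphi\colon U\to Z$. You combine three facts: $d(v^2)$ is nonzero on $Z$ but killed by $\varphi^*$; the conormal sequence for $X=V(g)\subset Z$; and $\varphi^*g=f$. Together these show that vanishing of $d(v^2)|_X$ at $y$ forces $df(P)=0$.

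Your route gives only the implication the statement asks for, not the converse. In exchange, it uses only the smoothness of $U$ and $Z$ and the Cartier equation of Proposition~\ref{prop:quotient}, so it is independent of Proposition~\ref{prop:smooth}. The paper's version is shorter and yields the sharper ``exactly when'' description of where $\beta$ vanishes.

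One remark on your input $d(v^2)(y)\neq0$: you need not rely on reading the \'etale coordinates of Remark~\ref{rem:local-quotient} as using the same $v$. This fact is itself the Jacobian criterion for the smooth scheme $U$, which locally equals $\Spec B[z]/(z^2-v^2)$ over the smooth $Z$. In other words, it is the paper's argument run one level up.
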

\begin{proof} The final assertion is immediate from the fact that $K^{\otimes 2}$ embeds into $\Omega^1_{X/k}$ and that $H^0(X, K) \neq 0$ (see Proposition \ref{prop:K}).\\[-0.5em]

\noindent\textbf{The existence of $\beta$.}
Squares of local functions on $H$ are invariant under the foliation, so they descend to functions on $X$. Using the identification  of $K$ with $\pi_*\cO_H/\cO_X$, we define the map
\[
 \eta \colon K\longrightarrow B\Omega_X^1,\qquad [a]\longmapsto d(a^2),
\]
where $a$ is a local function on $H$ and $a^2$ is regarded as a
function on $X$. Replacing $a$ by $a+c$ for $c\in\cO_X$ changes
$a^2$ by $c^2$, whose differential is zero. Thus $\eta$ is
well-defined. The identity $d((ca)^2)=c^2d(a^2)$ shows that
$\eta$ is $\cO_X$-linear.
Composing with $B\Omega_X^1\subset F_*\Omega^1_{X/k}$ and applying adjunction gives \eqref{eq:positive-differential},  since
$F^*K\simeq K^{\otimes2}$.

\vspace{0.7em}
\noindent\textbf{Fibrewise nonvanishing.}
Use the local description $A=B\oplus Bv$ from
Proposition~\ref{prop:K}, and put $b:=v^2\in B$.
Then
\[
 A\simeq B[z]/(z^2-b),\qquad
 \beta([v]^{\otimes2})=db.
\]
Here $[v]$ is a local generator of $K$. Since $X$ is smooth and
$d(z^2-b)=-db$ in characteristic two, the Jacobian
criterion shows that $db$ is nonzero at a point of $X$ exactly
when $H$ is smooth at the unique point above it. This proves the
assertion about  $\beta$ being non-zero on fibres.
\end{proof}

The existence of a non-zero global one-form already implies that $X$ is not separably
rationally connected (see \cite{Gounelas}).  Below we show that $X$ is not even separably uniruled.

A nonconstant morphism $f \colon \bP^1\to X$ is called \emph{free} if
$f^*T_X$ is globally generated. A smooth projective variety over an
algebraically closed field is separably uniruled if and only if it
admits a free rational curve, see \cite{Sato}*{Proposition~1.1}.

\begin{proposition}\label{prop:not-separably-uniruled}
The fourfold $X$ admits no free rational curve. In particular,
it is not separably uniruled.
\end{proposition}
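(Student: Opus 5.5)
Suppose $f\colon\bP^1\to X$ is a free rational curve. The plan is to pull back the injection $\beta\colon K^{\otimes2}\to\Omega^1_{X/k}$ of Proposition~\ref{prop:frobenius-one-forms} and get a contradiction with freeness. Since $f^*T_X$ is globally generated, $f^*T_X\simeq\bigoplus\cO_{\bP^1}(a_i)$ with all $a_i\ge0$, hence
\[
f^*\Omega^1_{X/k}\simeq\bigoplus\cO_{\bP^1}(-a_i),
\]
and every line subsheaf of $f^*\Omega^1_{X/k}$ has degree $\le 0$. On the other hand, $K$ is ample by Proposition~\ref{prop:K} and $f$ is nonconstant, so $\deg f^*K^{\otimes2}=2\deg f^*K>0$. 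Thus, as soon as $f^*\beta\colon f^*K^{\otimes2}\to f^*\Omega^1_{X/k}$ is nonzero, it is an injection of a positive-degree line bundle into a bundle with only nonpositive summands. Every projection $\cO(2\deg f^*K)\to\cO(-a_i)$ is then zero, which is a contradiction.

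It therefore suffices to show that $f^*\beta\neq0$, i.e.\ that $f(\bP^1)$ is not contained in the locus where $\beta$ vanishes on fibres. By Proposition~\ref{prop:frobenius-one-forms}, this locus is contained in $\pi(\operatorname{Sing}H)$, which consists of $63$ points. Since $f$ is nonconstant, its image is a curve, so it meets a point where $\beta$ is nonzero on the fibre. At such a point $f^*\beta$ is nonzero in the fibre, so the map of sheaves $f^*\beta$ is nonzero and hence injective, because a nonzero map from a line bundle on an integral curve is injective.

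The only step needing care is this fibrewise nonvanishing. It is exactly the reason the finiteness statement in Proposition~\ref{prop:frobenius-one-forms} is needed: injectivity of $\beta$ alone would not prevent $f^*\beta$ from vanishing if the degeneracy locus of $\beta$ contained curves. Finally, a smooth projective variety over an algebraically closed field is separably uniruled if and only if it admits a free rational curve (\cite{Sato}*{Proposition~1.1}). Since $X$ admits none, $X$ is not separably uniruled.
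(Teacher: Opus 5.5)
Your proposal is correct and follows the paper's proof essentially step for step. You pull back $\beta$, use the finiteness of $\pi(\operatorname{Sing}H)$ to get $f^*\beta\neq0$, and then contradict the ampleness of $K$ using the nonpositivity of $f^*\Omega^1_{X/k}$. The only cosmetic difference is that you get the nonpositivity from the Grothendieck splitting $f^*\Omega^1_{X/k}\simeq\bigoplus\cO_{\bP^1}(-a_i)$ with $a_i\ge0$, whereas the paper dualizes the surjection $\cO_{\bP^1}^{\oplus N}\to f^*T_X$ to embed $f^*\Omega^1_{X/k}$ into a trivial bundle.
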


\begin{proof}
Let $\beta$ be the homomorphism of
Proposition~\ref{prop:frobenius-one-forms}, and suppose that
$f:\bP^1\to X$ is free. By
Proposition~\ref{prop:complete-intersection}, the set
$\pi(\operatorname{Sing}H)$ is finite. Since $f$ is nonconstant,
its image meets the complement of this set, so $f^*\beta\ne0$.
Freeness gives a surjection
$\cO_{\bP^1}^{\oplus N}\to f^*T_X$ for some $N>0$, and
dualizing yields
\[
 f^*K^{\otimes2}\xrightarrow{f^*\beta}f^*\Omega^1_{X/k}
                 \hookrightarrow\cO_{\bP^1}^{\oplus N}.
\]
Since the composition of these maps is nonzero, we get a contradiction with the fact that $K$ is ample (Proposition~\ref{prop:K}).
\end{proof}

 \DIFaddend

\end{document}